\theoremstyle{plain} 
\newtheorem{theorem}{\indent\sc Theorem}[section]
\newtheorem{lemma}[theorem]{\indent\sc Lemma}
\newtheorem{proposition}[theorem]{\indent\sc Proposition}
\theoremstyle{definition} 
\newtheorem{remark}[theorem]{\indent\sc Remark}
\newtheorem{example}[theorem]{\indent\sc Example}
\def\address#1#2{\begingroup
\noindent\parbox[t]{7.8cm}{%
\small{\scshape\ignorespaces#1}\par\vskip1ex
\noindent\small{\itshape E-mail address}%
\/: #2\par\vskip4ex}\hfill%
\endgroup}%
\title{Generation of ring class fields by eta-quotients} 
\author{
\textsc{Ja Kyung Koo, Dong Hwa Shin and Dong Sung Yoon$^*$} 
}
\date{} 
\begin{document}

\maketitle

\footnote{ 
2010 \textit{Mathematics Subject Classification}. Primary 11G16; Secondary 11F03, 11G15, 11R37.}
\footnote{ 
\textit{Key words and phrases}. Class field theory, complex multiplication, elliptic and
modular units.} 
\footnote{$^*$Corresponding author.}
\footnote{
\thanks{The second named author was
supported by Hankuk University of Foreign Studies Research Fund of
2017.
} }

\begin{abstract}
We generate ring class fields of imaginary quadratic fields in terms
of the special values of certain eta-quotients, which are related to
the relative norms of Siegel-Ramachandra invariants. These give us
minimal polynomials with relatively small coefficients from which we are able to solve certain
quadratic Diophantine equations concerning non-convenient numbers.
\end{abstract}

\maketitle

\section {Introduction}

Let $K$ be an imaginary quadratic field of discriminant $d_K$. We set
\begin{equation}\label{tau_K}
\tau_K=\left\{\begin{array}{ll}
(-1+\sqrt{d_K})/2 & \textrm{if}~d_K\equiv1\pmod{4},\\
\sqrt{d_K}/2 & \textrm{if}~d_K\equiv0\pmod{4},
\end{array}\right.
\end{equation}
which belongs to the complex upper half-plane $\mathbb{H}$ and
generates the ring of integers $\mathcal{O}_K$ of $K$ over
$\mathbb{Z}$. Let $\mathcal{O}=[N\tau_K,1]$ be the order of
conductor $N$ ($\geq1$) in $K$. As a consequence of the main theorem
of complex multiplication, the singular value
$j(\mathcal{O})=j(N\tau_K)$ of the elliptic modular function
$j(\tau)$ generates the ring class field $H_\mathcal{O}$ of the
order $\mathcal{O}$ over $K$ (\cite[Chapter 10, Theorem 5]{Lang} or
\cite[Theorem 5.7]{Shimura}). However, its minimal polynomial has
too large integer coefficients to handle in practical use (see
\cite[Theorem 9.2 and $\S$13.A]{Cox}).
\par
The \textit{Dedekind eta-function} and \textit{modular discriminant function} are defined by
\begin{equation}\label{etaDelta}
\eta(\tau)=q^{1/24}\prod_{n=1}^\infty(1-q^n)
\quad\textrm{and}\quad\Delta(\tau)=(2\pi)^{12}\eta(\tau)^{24}\quad(\tau\in\mathbb{H},~q=e^{2\pi i\tau}),
\end{equation}
respectively.

\par
Let $N$ ($\geq2$) be an integer with
prime factorization $N=\prod_{k=1}^m p_k^{r_k}$.
For each subset $S$ of $\{1,\ldots,m\}$ we set
\begin{equation}\label{P_S}
P_S=\left\{\begin{array}{ll}
1 & \textrm{if}~S=\emptyset,\\
\prod_{k\in S} p_k & \textrm{if}~S\neq\emptyset,
\end{array}\right.
\end{equation}
and let
\begin{equation}\label{munu}
P_N=\prod_{k=1}^m(p_k-1),\quad
\nu_N=\left\{\begin{array}{ll}p_1&\textrm{if}~m=1,\\
1 & \textrm{if}~m\geq2,\end{array}\right.\quad
\mu_N=\left\{\begin{array}{ll}2&\textrm{if}~m=1~\textrm{and}~p_1\equiv1\pmod{8},\\
1 & \textrm{otherwise}.\end{array}\right.
\end{equation}
And, for each nontrivial ideal $\mathfrak{f}$ of $\mathcal{O}_K$ we consider
the canonical homomorphism
\begin{equation*}
\pi_\mathfrak{f}:\mathcal{O}_K\rightarrow\mathcal{O}_K/\mathfrak{f}.
\end{equation*}
In this paper we shall prove that if the order of the
group
\begin{equation*}
\pi_{p_k^{r_k}\mathcal{O}_K}(\mathcal{O}_K)^\times/
\pi_{p_k^{r_k}\mathcal{O}_K}(\mathcal{O}_K^\times)
\pi_{p_k^{r_k}\mathcal{O}_K}(\mathbb{Z})^\times
\end{equation*}
is greater than $2$ for each $k\in\{1,\ldots,m\}$, then
the special value
\begin{equation*}
\nu_N^{12\mu_N/\gcd(24,P_N)}
\prod_{S\subseteq\{1,\ldots,m\}}\eta((N/P_S)\tau_K)^{24(-1)^{|S|}\mu_N/\gcd(24,P_N)}
\end{equation*}
generates $H_\mathcal{O}$ over $K$ (Theorem \ref{main} and Remark
\ref{condition}). This ring class invariant is a real algebraic
integer whose minimal polynomial has relatively small coefficients
(Examples \ref{Example1}, \ref{Example2}, \ref{Example3}). To this
end we shall further improve Schertz's idea \cite{Schertz2} on
characters of class groups (Lemmas \ref{divide} and
\ref{constructcharacter}) when utilizing the second Kronecker limit
formula concerning Siegel-Ramachandra invariants (Proposition
\ref{Kronecker}).

\section {Eta-quotients}

We shall show that the
special values of certain $\eta$-quotients lie in the ring class fields
of imaginary quadratic fields.

\begin{lemma}\label{modularity}
Let $N$ \textup{($\geq2$)} be an integer.
Assume that a family of integers $\{m_d\}_{d|N}$, where $d$ runs over all positive divisors of $N$, satisfies the following conditions:
\begin{itemize}
\item[\textup{(i)}] $\sum_{d|N}m_d=0$.
\item[\textup{(ii)}] $\sum_{d|N}dm_d\equiv\sum_{d|N}(N/d)m_d\equiv0\pmod{24}$.
\item[\textup{(iii)}] $\prod_{d|N}d^{m_d}$
is a square in $\mathbb{Q}$.
\end{itemize}
Then the $\eta$-quotient
\begin{equation*}
f(\tau)=\prod_{d|N}\eta(d\tau)^{m_d}
\end{equation*}
is a meromorphic modular function on $\Gamma_0(N)=\{\gamma\in\mathrm{SL}_2(\mathbb{Z})
~|~\gamma\equiv\left[\begin{matrix}*&*\\0&*\end{matrix}\right]\pmod{N}\}$.
\end{lemma}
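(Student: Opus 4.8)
The plan is to reduce everything to the transformation behaviour of the Dedekind $\eta$-function under $\mathrm{SL}_2(\mathbb{Z})$, and then to recognize the three hypotheses as exactly the conditions that annihilate the resulting automorphy factor. Since $\eta$ is holomorphic and nowhere vanishing on $\mathbb{H}$, so is $f$, so all prospective poles and zeros of $f$ sit at the cusps; consequently it suffices to establish two things: (a) the invariance $f(\gamma\tau)=f(\tau)$ for every $\gamma\in\Gamma_0(N)$, and (b) meromorphy of $f$ at each cusp of $\Gamma_0(N)$.

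For (a), fix $\gamma=\left[\begin{smallmatrix}a&b\\c&d\end{smallmatrix}\right]\in\Gamma_0(N)$ (after possibly replacing $\gamma$ by $-\gamma$, which acts identically on $\mathbb{H}$, one may assume $c>0$; the case $c=0$ is the translation case handled below). For each divisor $\delta\mid N$ we have $\delta\mid N\mid c$, so the matrix $\gamma_\delta:=\left[\begin{smallmatrix}a&\delta b\\c/\delta&d\end{smallmatrix}\right]$ lies in $\mathrm{SL}_2(\mathbb{Z})$, and one checks the identities $\delta(\gamma\tau)=\gamma_\delta(\delta\tau)$ and $(c/\delta)(\delta\tau)+d=c\tau+d$. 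Substituting into the standard transformation formula $\eta(\gamma'\tau')=\varepsilon(\gamma')\,(c'\tau'+d')^{1/2}\eta(\tau')$, in which $\varepsilon(\gamma')$ is the Dedekind $\eta$-multiplier --- a $24$-th root of unity expressible through Dedekind sums and Jacobi symbols --- yields $\eta(\delta\gamma\tau)=\varepsilon(\gamma_\delta)\,(c\tau+d)^{1/2}\eta(\delta\tau)$, where the automorphy factor $(c\tau+d)^{1/2}$ is literally the same for every $\delta$. Taking the product over $\delta\mid N$ with exponents $m_\delta$, condition (i) forces the weight factor $(c\tau+d)^{\frac12\sum_\delta m_\delta}$ to equal $1$ (so no branch ambiguity arises), and we obtain $f(\gamma\tau)=\chi(\gamma)\,f(\tau)$ with $\chi(\gamma):=\prod_{\delta\mid N}\varepsilon(\gamma_\delta)^{m_\delta}\in\mu_{24}$; as the weight is $0$, the map $\chi:\Gamma_0(N)\to\mathbb{C}^\times$ is a homomorphism.

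It remains to show $\chi\equiv1$, which is the technical core. I would insert the explicit value of $\varepsilon$ into $\chi(\gamma)=\prod_\delta\varepsilon(\gamma_\delta)^{m_\delta}$ and split it into a Jacobi-symbol part and an exponential part. The Jacobi-symbol contributions collect into a single symbol with numerator $\prod_{\delta\mid N}\delta^{m_\delta}$, which is trivial precisely because that product is a square in $\mathbb{Q}$ by (iii) (note $\gcd(d,N)=1$, so the symbol is defined and multiplicative in the numerator). The exponential contributions, after the Dedekind-sum terms are collected, reduce to a root of unity depending only on $\sum_{\delta\mid N}\delta m_\delta$ and $\sum_{\delta\mid N}(N/\delta)m_\delta$ modulo $24$, hence is trivial by (ii). In practice it is cleanest to first treat $\gamma=T=\left[\begin{smallmatrix}1&1\\0&1\end{smallmatrix}\right]$ directly: the expansion $f(\tau)=q^{\frac{1}{24}\sum_{\delta\mid N}\delta m_\delta}\prod_{\delta\mid N}\prod_{n=1}^{\infty}(1-q^{\delta n})^{m_\delta}$ has integral exponents by the first congruence in (ii), so $f$ is fixed by $\tau\mapsto\tau+1$, i.e. $\chi(T)=1$; then a general $\gamma$ with $N\mid c$, $c\neq0$, is transferred to this case via the Fricke involution $\tau\mapsto-1/(N\tau)$, which replaces the data $\{m_\delta\}_{\delta\mid N}$ by $\{m_{N/\delta}\}_{\delta\mid N}$ and thereby trades the first congruence in (ii) for the second --- this is the now-classical Newman--Ligozat computation.

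Finally, for (b), the displayed $q$-expansion already shows that $f$ is meromorphic at $\infty$ (a Laurent series with finite principal part); conjugating $f$ by a set of coset representatives for $\Gamma_0(N)\backslash\mathrm{SL}_2(\mathbb{Z})$ --- equivalently, invoking Ligozat's formula for the order of $f$ at the cusp $a/c$ with $c\mid N$, which is legitimate once (i)--(iii) hold --- gives meromorphy at every cusp of $\Gamma_0(N)$. Combining (a) and (b), $f$ is a meromorphic modular function on $\Gamma_0(N)$. The main obstacle is the step $\chi\equiv1$: everything else is formal, but pinning down exactly how the two congruences in (ii) and the square condition (iii) cancel the Dedekind-sum and Jacobi-symbol parts of the $\eta$-multiplier requires the careful bookkeeping that constitutes the heart of the classical argument.
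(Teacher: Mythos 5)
The paper does not actually prove this lemma: its entire proof is the citation ``See \cite[Theorem 1.64]{Ono}'', i.e.\ it delegates everything to Ligozat's criterion for eta-quotients. Your proposal instead reconstructs the skeleton of the classical argument behind that citation, and the formal scaffolding is correct: the matrices $\gamma_\delta=\left[\begin{smallmatrix}a&\delta b\\c/\delta&d\end{smallmatrix}\right]$ and the identities $\delta(\gamma\tau)=\gamma_\delta(\delta\tau)$, $(c/\delta)(\delta\tau)+d=c\tau+d$ are right; condition (i) does kill the common factor $(c\tau+d)^{1/2}$ and reduces the problem to showing that $\chi(\gamma)=\prod_{\delta\mid N}\varepsilon(\gamma_\delta)^{m_\delta}$ is the trivial character of $\Gamma_0(N)$; and meromorphy at the cusps is indeed automatic for any eta-quotient, so part (b) is unproblematic (it does not even need (i)--(iii)).

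The genuine gap is that the decisive step, $\chi\equiv1$, is never carried out: you describe what the Jacobi-symbol and Dedekind-sum contributions \emph{ought} to reduce to and then explicitly defer the computation. Moreover, the one concrete shortcut you propose does not close the argument. Verifying $\chi(T)=1$ from the $q$-expansion and then transferring via the Fricke involution only controls $\chi$ on the subgroup generated by $T$ and $W_NTW_N^{-1}=\pm\left[\begin{smallmatrix}1&0\\-N&1\end{smallmatrix}\right]$, and for general $N$ these two parabolic elements generate a proper subgroup of $\Gamma_0(N)$ (the minimal number of generators of $\Gamma_0(N)$ grows with $N$), so a homomorphism into $\mu_{24}$ is not determined by its values there. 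To finish one must substitute the explicit eta-multiplier (Petersson's formula, or the Dedekind-sum expression together with Dedekind reciprocity) into the product over $\delta\mid N$ and check that the two congruences in (ii) annihilate the exponential part while (iii) trivializes the quadratic-symbol part $\left(\tfrac{\prod_\delta\delta^{m_\delta}}{d}\right)$; that bookkeeping is precisely Ligozat's computation and is the entire mathematical content of the lemma. As written, your argument has the same logical status as the paper's one-line proof --- an appeal to the classical result --- but without actually supplying the reference or the computation.
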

\begin{proof}
See \cite[Theorem 1.64]{Ono}.
\end{proof}

\begin{remark}
By the definition (\ref{etaDelta}) we see the following identity
\begin{equation*}
f(\tau)=q^{(1/24)\sum_{d|N}dm_d}\prod_{d|N}\prod_{n=1}^\infty(1-q^{dn})^{m_d}.
\end{equation*}
So, $f(\tau)$ has rational Fourier coefficients with respect to $q$ and
has neither zeros nor poles on $\mathbb{H}$.
\end{remark}

\begin{proposition}\label{etaquotient}
Let $N$ \textup{($\geq2$)} be an integer with prime factorization
$N=\prod_{k=1}^m p_k^{r_k}$.
With the same notations as in \textup{(\ref{P_S})} and \textup{(\ref{munu})}, the $\eta$-quotient
\begin{equation*}
g(\tau)=\prod_{S\subseteq\{1,\ldots,m\}}\eta((N/P_S)\tau)^{24(-1)^{|S|}\mu_N/\gcd(24,P_N)}
\end{equation*}
is a weakly holomorphic modular function on $\Gamma_0(N)$ with rational Fourier coefficients.
\end{proposition}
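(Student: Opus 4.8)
The plan is to recognize $g$ as an $\eta$-quotient of exactly the shape treated in Lemma \ref{modularity} and then to verify its three numerical hypotheses. Since $p_1,\dots,p_m$ are distinct primes, the assignment $S\mapsto N/P_S$ is injective on the power set of $\{1,\dots,m\}$ and lands in the set of positive divisors of $N$; hence we may write $g(\tau)=\prod_{d\mid N}\eta(d\tau)^{m_d}$, where $m_{N/P_S}=(-1)^{|S|}e$ with $e:=24\mu_N/\gcd(24,P_N)$ and $m_d=0$ for every other divisor $d$ of $N$. Note that $e\in\mathbb{Z}$ because $\gcd(24,P_N)\mid 24$, so all the exponents $m_d$ are integers. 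Once the hypotheses of Lemma \ref{modularity} are checked, the Remark following it will give at once that $g$ has rational Fourier coefficients and is holomorphic and nonvanishing on $\mathbb{H}$.

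To verify the hypotheses I would repeatedly use the elementary identity $\sum_{S\subseteq\{1,\dots,m\}}(-1)^{|S|}\prod_{k\in S}x_k=\prod_{k=1}^m(1-x_k)$. Hypothesis (i) is immediate: $\sum_{d\mid N}m_d=e\sum_S(-1)^{|S|}=e(1-1)^m=0$. For hypothesis (ii), taking $x_k=p_k$ gives $\sum_{d\mid N}(N/d)\,m_d=e\sum_S(-1)^{|S|}P_S=(-1)^m e\,P_N$, while taking $x_k=1/p_k$ gives $\sum_{d\mid N}d\,m_d=e\,N\prod_{k=1}^m(1-1/p_k)=e\,P_N\prod_{k=1}^m p_k^{r_k-1}$; since $e\,P_N=24\mu_N\bigl(P_N/\gcd(24,P_N)\bigr)$ is a multiple of $24$, both expressions vanish modulo $24$.

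The one point that needs care is hypothesis (iii). Writing $\prod_{d\mid N}d^{m_d}=\prod_S(N/P_S)^{(-1)^{|S|}e}$ and noting that a prime $p_j$ occurs in $N/P_S$ with exponent $r_j-[\,j\in S\,]$, the exponent of $p_j$ in this product equals $e\bigl(r_j(1-1)^m-\sum_{S\ni j}(-1)^{|S|}\bigr)=e\,(1-1)^{m-1}$, which is $0$ when $m\geq 2$. Thus for $m\geq 2$ the product is $1$, a square; for $m=1$ it equals $p_1^{\,e}$, and one must check that $e$ is even. Writing $v_2$ for the $2$-adic valuation: if $p_1\equiv 1\pmod 8$ then $\mu_N=2$ and $8\mid p_1-1$, so $v_2(48)=4>v_2\bigl(\gcd(24,p_1-1)\bigr)$; otherwise $\mu_N=1$ and $v_2(p_1-1)\leq 2$, so $v_2\bigl(\gcd(24,p_1-1)\bigr)\leq 2<v_2(24)=3$. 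In either case $v_2(e)\geq 1$, so (iii) holds. I expect this to be the main obstacle: the definition of $\mu_N$ is calibrated precisely so that the exceptional value $\mu_N=2$ forces $e$ to be even in the one family of cases ($8\mid p_1-1$) where it would otherwise be odd.

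Granting the three hypotheses, Lemma \ref{modularity} shows that $g$ is a meromorphic modular function on $\Gamma_0(N)$, and by the Remark following it $g$ has no zeros or poles on $\mathbb{H}$ and has rational Fourier coefficients. Since the poles of $g$ are therefore confined to the cusps of $X_0(N)$, $g$ is weakly holomorphic, which completes the proof.
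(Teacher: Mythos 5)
Your proposal is correct and follows essentially the same route as the paper: identify $g$ with an $\eta$-quotient $\prod_{d\mid N}\eta(d\tau)^{m_d}$ and verify the three hypotheses of Lemma \ref{modularity} via the inclusion--exclusion identity $\sum_S(-1)^{|S|}\prod_{k\in S}x_k=\prod_k(1-x_k)$. Your only departures are cosmetic --- you compute the congruences in (ii) directly modulo $24$ rather than modulo $P_N$, and you spell out the $2$-adic check that $24\mu_N/\gcd(24,p_1-1)$ is even in the $m=1$ case, which the paper leaves as ``by the definition of $\mu_N$.''
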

\begin{proof}
By Lemma \ref{modularity} it suffices to show that
\begin{itemize}
\item[(i)] $\sum_{S\subseteq\{1,\ldots,m\}}(-1)^{|S|}=0$,
\item[(ii)] $\sum_{S\subseteq\{1,\ldots,m\}}(N/P_S)(-1)^{|S|}\equiv
\sum_{S\subseteq\{1,\ldots,m\}}(P_S)(-1)^{|S|}\equiv
0\pmod{P_N}$,
\item[(iii)] $\prod_{S\subseteq\{1,\ldots,m\}}(N/P_S)^{24(-1)^{|S|}\mu_N/\gcd(24,P_N)}$ is a square in $\mathbb{Q}$.
\end{itemize}
\par
First, we obtain that
\begin{equation*}
\sum_{S\subseteq\{1,\ldots,m\}}(-1)^{|S|}=
\sum_{\ell=0}^m\sum_{|S|=\ell}(-1)^{|S|}
=\sum_{\ell=0}^m\dbinom{m}{\ell}(-1)^\ell=0.
\end{equation*}
\par
Next, we derive that
\begin{eqnarray*}
\sum_{S\subseteq\{1,\ldots,m\}}(N/P_S)(-1)^{|S|}&=&\sum_{\ell=0}^m\sum_{|S|=\ell}
(N/P_S)(-1)^{|S|}\\
&=&N\sum_{\ell=0}^m(-1)^\ell\sum_{|S|=\ell}1/P_S\\
&=&N\prod_{k=1}^m(1-1/p_k)\\
&=&(N/p_1p_2\cdots p_m)\prod_{k=1}^m(p_k-1)\\
&\equiv&0\pmod{P_N},
\end{eqnarray*}
and in a similar way we get
\begin{equation*}
\sum_{S\subseteq\{1,\ldots,m\}}P_S(-1)^{|S|}=\sum_{\ell=0}^m\sum_{|S|=\ell}P_S(-1)^{|S|}
=\sum_{\ell=0}^m(-1)^\ell\sum_{|S|=\ell}P_S
=\prod_{k=1}^m(1-p_k)
\equiv0\pmod{P_N}.
\end{equation*}
\par
Lastly, we deduce that
\begin{eqnarray*}
\prod_{S\subseteq\{1,\ldots,m\}}(N/P_S)^{24(-1)^{|S|}\mu_N/\gcd(24,P_N)}
&=&\prod_{S\subseteq\{1,\ldots,m\}}P_S^{-24(-1)^{|S|}\mu_N/\gcd(24,P_N)}\quad\textrm{by (i)}\\
&=&
(p_1\cdots p_m)^{(\sum_{\ell=0}^{m-1}\left(\begin{smallmatrix}m-1\\\ell\end{smallmatrix}\right)(-1)^\ell)24\mu_N/\gcd(24,P_N)}\\
&=&\left\{\begin{array}{ll}
p_1^{24\mu_N/\gcd(24,p_1-1)}& \textrm{if}~m=1,\\
1 & \textrm{if}~m\geq2,
\end{array}\right.
\end{eqnarray*}
which is a square in $\mathbb{Q}$ by the definition of $\mu_N$ in (\ref{munu}).
This proves the proposition.
\end{proof}

For a positive integer $N$ let $\mathcal{F}_N$ be the field of
meromorphic modular functions on
$\Gamma(N)=\{\gamma\in\mathrm{SL}_2(\mathbb{Z}) ~|~\gamma\equiv
I_2\pmod{N}\}$ whose Fourier coefficients with respect to $q^{1/N}$
lie in $\mathbb{Q}(\zeta_N)$, where $\zeta_N=e^{2\pi i/N}$. Then it
is well-known that $\mathcal{F}_N$ is a Galois extension of
$\mathcal{F}_1=\mathbb{Q}(j(\tau))$ whose Galois group is isomorphic
to
\begin{equation*}
\mathrm{GL}_2(\mathbb{Z}/N\mathbb{Z})/\{\pm I_2\}=
\left\{\left[\begin{matrix}1&0\\0&d\end{matrix}\right]~|~d\in(\mathbb{Z}/N\mathbb{Z})^\times\right\}
\cdot\mathrm{SL}_2(\mathbb{Z}/N\mathbb{Z})/\{\pm I_2\}.
\end{equation*}
Let $h(\tau)=\sum_{n>-\infty}c_nq^{n/N}\in\mathcal{F}_N$ with $c_n\in\mathbb{Q}(\zeta_N)$.
The matrix $\left[\begin{matrix}1&0\\0&d\end{matrix}\right]$ with $d\in(\mathbb{Z}/N\mathbb{Z})^\times$ acts on
$h(\tau)$ by
\begin{equation*}
h(\tau)^{\left[\begin{smallmatrix}1&0\\0&d\end{smallmatrix}\right]}=
\sum_{n>-\infty}c_n^{\sigma_d}q^{n/N},
\end{equation*}
where $\sigma_d$ is the automorphism of $\mathbb{Q}(\zeta_N)$
given by $\zeta_N^{\sigma_d}=\zeta_N^d$.
Now, let $\alpha\in\mathrm{SL}_2(\mathbb{Z}/N\mathbb{Z})/\{\pm I_2\}$.
Take a preimage $\widetilde{\alpha}\in\mathrm{SL}_2(\mathbb{Z})$ of $\alpha$
with respect to the reduction $\mathrm{SL}_2(\mathbb{Z})\rightarrow\mathrm{SL}_2(\mathbb{Z}/N\mathbb{Z})/\{\pm I_2\}$.
Then, $\alpha$ acts on $h(\tau)$ by the fractional linear transformation of $\widetilde{\alpha}$ \cite[Chapter 6, Theorem 3]{Lang}.
\par
Throughout this paper we let $K$ be an imaginary quadratic field of
discriminant $d_K$ with ring of integers $\mathcal{O}_K$ and
$\tau_K$ be as in (\ref{tau_K}). For a nontrivial ideal
$\mathfrak{f}$ of $\mathcal{O}_K$ we denote by
\begin{eqnarray*}
I_K(\mathfrak{f})&=&\textrm{the group of fractional ideals of $K$ prime to $\mathfrak{f}$},\\
P_{K,1}(\mathfrak{f})&=&\langle x\mathcal{O}_K~|~x\in\mathcal{O}_K~\textrm{such that}~x\equiv1\pmod{\mathfrak{f}}\rangle.
\end{eqnarray*}
In particular, if $\mathfrak{f}=N\mathcal{O}_K$ for a positive integer $N$, then we further denote by
\begin{equation*}
P_{K,\mathbb{Z}}(\mathfrak{f})=\langle x\mathcal{O}_K~|~x\in\mathcal{O}_K~\textrm{such that}
~x\equiv n\pmod{\mathfrak{f}}~\textrm{for some $n\in\mathbb{Z}$ prime to $N$}\rangle.
\end{equation*}
By the existence theorem of class field theory there exists a unique abelian extension $K_\mathfrak{f}$ of $K$,
called the \textit{ray class field} of $K$ modulo $\mathfrak{f}$,
whose Galois group $\mathrm{Gal}(K_\mathfrak{f}/K)$ is isomorphic to
the ray class group $\mathrm{Cl}(\mathfrak{f})=I_K(\mathfrak{f})/P_{K,1}(\mathfrak{f})$
modulo $\mathfrak{f}$ via the Artin reciprocity map
$\sigma_\mathfrak{f}:\mathrm{Cl}(\mathfrak{f})\rightarrow\mathrm{Gal}(K_\mathfrak{f}/K)$
\cite[Chapters IV and V]{Janusz}.
In particular, if $\mathfrak{f}=\mathcal{O}_K$, then $K_\mathfrak{f}$ becomes the Hilbert
class field $H_K$ of $K$, that is, the maximal unramified abelian extension of $K$.
\par
Now, we let $\mathfrak{f}=N\mathcal{O}_K$ for a positive integer $N$.
As a consequence of the main theorem of complex multiplication we
obtain
\begin{equation}\label{CM}
H_K=K(j(\tau_K))\quad\textrm{and}\quad
K_\mathfrak{f}=K(h(\tau_K)~|~h(\tau)\in\mathcal{F}_N~\textrm{is finite at}~\tau_K)
\end{equation}
\cite[Chapter 10, Theorem 1 and Corollary to Theorem 2]{Lang}.
Let $\min(\tau_K,\mathbb{Q})=X^2+bX+c$ and define a subgroup $W_{K,N}$ of $\mathrm{GL}_2(\mathbb{Z}/N\mathbb{Z})$ by
\begin{equation*}
W_{K,N}=\left\{\left[\begin{matrix}t-bs & -cs\\s&t\end{matrix}\right]\in\mathrm{GL}_2(\mathbb{Z}/N\mathbb{Z})~|~t,s\in\mathbb{Z}/N\mathbb{Z}\right\}.
\end{equation*}
By Shimura's reciprocity law we
have the surjection
\begin{eqnarray*}
W_{K,N}&\rightarrow&\mathrm{Gal}(K_\mathfrak{f}/H_K)\\
\gamma&\mapsto&(h(\tau_K)\mapsto h^\gamma(\tau_K)~|~h(\tau)\in\mathcal{F}_N~\textrm{is finite at}~\tau_K)
\end{eqnarray*}
whose kernel is
\begin{equation*}
T_{K,N}=\left\{
\begin{array}{ll}
\langle\left[\begin{smallmatrix}0&-1\\1&0\end{smallmatrix}
\right]\rangle & \textrm{if}~K=\mathbb{Q}(\sqrt{-1}),\\
\langle
\left[\begin{smallmatrix}1&1\\-1&0\end{smallmatrix}\right]\rangle
& \textrm{if}~K=\mathbb{Q}(\sqrt{-3}),\\
\langle-I_2\rangle & \textrm{if}~K\neq\mathbb{Q}(\sqrt{-1}),\mathbb{Q}(\sqrt{-3})
\end{array}\right.
\end{equation*}
(\cite[Theorem 6.31 and Proposition 6.34]{Shimura} and \cite[$\S3$]{Stevenhagen}).
\par
On the other hand, let $\mathcal{O}=[N\tau_K,1]$ be the order of conductor $N$ in $K$. Then the ideal class group $\mathrm{C}(\mathcal{O})=I(\mathcal{O})/P(\mathcal{O})$ of the order $\mathcal{O}$, where $I(\mathcal{O})$ is the group of proper fractional $\mathcal{O}$-ideals and $P(\mathcal{O})$ is the subgroup of principal $\mathcal{O}$-ideals, is isomorphic to $I_K(\mathfrak{f})/P_{K,\mathbb{Z}}(\mathfrak{f})$
\cite[$\S$7.A and Proposition 7.22]{Cox}.
The \textit{ring class field} $H_\mathcal{O}$ of the order $\mathcal{O}$ is defined to
be the unique abelian extension of $K$ whose Galois group satisfies
\begin{equation}\label{GalH_O}
\mathrm{Gal}(H_\mathcal{O}/K)\simeq
\mathrm{C}(\mathcal{O})\simeq I_K(\mathfrak{f})/P_{K,\mathbb{Z}}(\mathfrak{f}).
\end{equation}

\begin{lemma}\label{Shimura}
We have the
isomorphism
\begin{eqnarray*}
\langle T_{K,N},tI_2~|~t\in(\mathbb{Z}/N\mathbb{Z})^\times\rangle/T_{K,N}
&\stackrel{\sim}{\rightarrow}&\mathrm{Gal}(K_\mathfrak{f}/H_\mathcal{O})\\
tI_2&\mapsto&(h(\tau_K)\mapsto h^{tI_2}(\tau_K)~|~h(\tau)\in\mathcal{F}_N~\textrm{is finite at}~\tau_K).
\end{eqnarray*}
\end{lemma}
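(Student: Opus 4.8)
The plan is to locate $\mathrm{Gal}(K_\mathfrak{f}/H_\mathcal{O})$ inside $\mathrm{Gal}(K_\mathfrak{f}/H_K)$ and to determine which cosets of $T_{K,N}$ in $W_{K,N}$ map onto it. Since $\mathcal{O}=[N\tau_K,1]$ has conductor $N$ we have $H_K\subseteq H_\mathcal{O}\subseteq K_\mathfrak{f}$ with $\mathfrak{f}=N\mathcal{O}_K$, so $\mathrm{Gal}(K_\mathfrak{f}/H_\mathcal{O})$ is a subgroup of $\mathrm{Gal}(K_\mathfrak{f}/H_K)$; via the surjection $W_{K,N}\to\mathrm{Gal}(K_\mathfrak{f}/H_K)$ with kernel $T_{K,N}$ it is therefore the image of a subgroup of $W_{K,N}$ containing $T_{K,N}$, and it suffices to show this subgroup is $\langle T_{K,N},\,tI_2\mid t\in(\mathbb{Z}/N\mathbb{Z})^\times\rangle$ and that the induced map is the stated one.

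On the ideal side, combining (\ref{GalH_O}) with the Artin isomorphism $\sigma_\mathfrak{f}:\mathrm{Cl}(\mathfrak{f})\stackrel{\sim}{\rightarrow}\mathrm{Gal}(K_\mathfrak{f}/K)$ shows that $\mathrm{Gal}(K_\mathfrak{f}/H_\mathcal{O})$ corresponds under $\sigma_\mathfrak{f}$ to $P_{K,\mathbb{Z}}(\mathfrak{f})/P_{K,1}(\mathfrak{f})$. Writing $P_K(\mathfrak{f})$ for the group of principal fractional ideals of $K$ prime to $\mathfrak{f}$ (so that $\mathrm{Gal}(K_\mathfrak{f}/H_K)$ corresponds to $P_K(\mathfrak{f})/P_{K,1}(\mathfrak{f})$), I would check that $x\mathcal{O}_K\mapsto\pi_{N\mathcal{O}_K}(x)$ induces an isomorphism $P_K(\mathfrak{f})/P_{K,1}(\mathfrak{f})\stackrel{\sim}{\rightarrow}\pi_{N\mathcal{O}_K}(\mathcal{O}_K)^\times/\pi_{N\mathcal{O}_K}(\mathcal{O}_K^\times)$ carrying $P_{K,\mathbb{Z}}(\mathfrak{f})/P_{K,1}(\mathfrak{f})$ onto $\pi_{N\mathcal{O}_K}(\mathbb{Z})^\times\pi_{N\mathcal{O}_K}(\mathcal{O}_K^\times)/\pi_{N\mathcal{O}_K}(\mathcal{O}_K^\times)$. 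In particular $\mathrm{Gal}(K_\mathfrak{f}/H_\mathcal{O})$ is generated by the Artin automorphisms $\sigma_\mathfrak{f}(n\mathcal{O}_K)$ with $n\in\mathbb{Z}$ prime to $N$.

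The last step is to pass to the matrix side using the explicit form of Shimura's reciprocity law underlying the surjection $W_{K,N}\to\mathrm{Gal}(K_\mathfrak{f}/H_K)$: for $\nu\in\mathcal{O}_K$ prime to $N$, the automorphism $\sigma_\mathfrak{f}(\nu\mathcal{O}_K)$ sends $h(\tau_K)$ to $h^{g_\nu}(\tau_K)$ for every $h\in\mathcal{F}_N$ finite at $\tau_K$, where $g_\nu\in W_{K,N}$ is the matrix representing multiplication by $\nu$ on $\mathcal{O}_K/N\mathcal{O}_K$ in the basis attached to $\min(\tau_K,\mathbb{Q})=X^2+bX+c$; for $\nu=n\in\mathbb{Z}$ this matrix is $nI_2$. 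Hence the subgroup of $W_{K,N}$ with image $\mathrm{Gal}(K_\mathfrak{f}/H_\mathcal{O})$ is generated by $T_{K,N}$ and all scalars $nI_2$ with $n$ prime to $N$, that is, it equals $\langle T_{K,N},\,tI_2\mid t\in(\mathbb{Z}/N\mathbb{Z})^\times\rangle$; restricting the isomorphism $W_{K,N}/T_{K,N}\stackrel{\sim}{\rightarrow}\mathrm{Gal}(K_\mathfrak{f}/H_K)$ to it yields the asserted isomorphism, with $tI_2T_{K,N}$ acting as $h(\tau_K)\mapsto h^{tI_2}(\tau_K)$ by the action recalled before the lemma.

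The main obstacle is the explicit Shimura reciprocity input---that $\sigma_\mathfrak{f}(n\mathcal{O}_K)$ acts on finite values at $\tau_K$ through the scalar $nI_2$---for which one appeals to the precise normalization in \cite[Theorem 6.31 and Proposition 6.34]{Shimura} and \cite[$\S3$]{Stevenhagen}; compatibly with that normalization one should also verify that the stated generator of $T_{K,N}$ is the image of $\pi_{N\mathcal{O}_K}(\mathcal{O}_K^\times)$ under $\nu\mapsto g_\nu$ (so $\pm1\mapsto\pm I_2$ when $\mathcal{O}_K^\times=\{\pm1\}$, and the order-$4$, resp. order-$6$, element in the two exceptional cases). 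The remaining steps are routine class field theory bookkeeping.
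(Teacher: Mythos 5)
Your argument is correct; note, however, that the paper does not prove this lemma itself but simply cites \cite[Proposition 5.3]{K-S} and \cite[Proposition 3.8]{E-K-S}, so there is no in-paper proof to match against. Your reconstruction is the standard derivation underlying those references: identify $\mathrm{Gal}(K_\mathfrak{f}/H_\mathcal{O})$ with $P_{K,\mathbb{Z}}(\mathfrak{f})/P_{K,1}(\mathfrak{f})$ via the Artin map and (\ref{GalH_O}), observe that this subgroup of $\mathrm{Cl}(\mathfrak{f})$ is generated by the classes $[n\mathcal{O}_K]$ with $n\in\mathbb{Z}$ prime to $N$ (the same computation as Lemma \ref{Ker} of the paper), and then invoke the explicit form of Shimura's reciprocity law to conclude that $\sigma_\mathfrak{f}(n\mathcal{O}_K)$ acts on finite values at $\tau_K$ through the scalar matrix $nI_2$. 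You correctly isolate the two points that need care: first, the normalization of the reciprocity law (whether $\nu\mathcal{O}_K$ corresponds to $g_\nu$, $g_\nu^{-1}$ or $g_{\overline{\nu}}$), which, as you say, is immaterial here because the collection of scalar matrices $tI_2$ with $t\in(\mathbb{Z}/N\mathbb{Z})^\times$ is stable under inversion and conjugation; and second, that $T_{K,N}$ is exactly the image of $\pi_{N\mathcal{O}_K}(\mathcal{O}_K^\times)$, which is what makes the induced map on the quotient by $T_{K,N}$ injective. Both points are precisely what the cited references supply, so your proof is a faithful, self-contained version of what the paper outsources to \cite{K-S} and \cite{E-K-S}.
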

\begin{proof}
See \cite[Proposition 5.3]{K-S} or \cite[Proposition 3.8]{E-K-S}.
\end{proof}

\begin{remark}\label{Galoisgroup}
Thus we obtain that
\begin{eqnarray*}
\mathrm{Gal}(H_\mathcal{O}/H_K)&\simeq&
\mathrm{Gal}(K_\mathfrak{f}/H_K)/\mathrm{Gal}(K_\mathfrak{f}/H_\mathcal{O})\\
&\simeq&
(W_{K,N}/T_{K,N})/(\langle T_{K,N},tI_2~|~t\in(\mathbb{Z}/N\mathbb{Z})^\times\rangle/T_{K,N})\\
&\simeq&
 W_{K,N}/\langle T_{K,N},tI_2~|~t\in(\mathbb{Z}/N\mathbb{Z})^\times\rangle.
\end{eqnarray*}
\end{remark}

\begin{lemma}\label{liein}
Let $h(\tau)$ be a meromorphic modular function on $\Gamma_0(N)$ with rational
Fourier coefficients. If $h(\tau)$ is finite at $\tau_K$,
then $h(\tau_K)$ lies in $H_\mathcal{O}$.
\end{lemma}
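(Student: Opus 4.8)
The plan is to show that $h(\tau_K)$ lies in the ray class field $K_\mathfrak{f}$ with $\mathfrak{f}=N\mathcal{O}_K$ and is fixed by $\mathrm{Gal}(K_\mathfrak{f}/H_\mathcal{O})$. First, since $\Gamma(N)\subseteq\Gamma_0(N)$, the function $h$ is in particular a meromorphic modular function on $\Gamma(N)$; as its Fourier coefficients with respect to $q$ are rational, those with respect to $q^{1/N}$ lie in $\mathbb{Q}\subseteq\mathbb{Q}(\zeta_N)$, so $h\in\mathcal{F}_N$. Because $h$ is finite at $\tau_K$, the second part of (\ref{CM}) gives $h(\tau_K)\in K_\mathfrak{f}$. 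Since $H_K\subseteq H_\mathcal{O}\subseteq K_\mathfrak{f}$ and $K_\mathfrak{f}/H_\mathcal{O}$ is Galois, it then suffices to verify that $h(\tau_K)$ is fixed by every element of $\mathrm{Gal}(K_\mathfrak{f}/H_\mathcal{O})$.

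By Lemma \ref{Shimura}, $\mathrm{Gal}(K_\mathfrak{f}/H_\mathcal{O})$ is generated by the images of the scalar matrices $tI_2$ with $t\in(\mathbb{Z}/N\mathbb{Z})^\times$ (the generators coming from $T_{K,N}$ map to the identity, as $T_{K,N}$ is the kernel of the surjection $W_{K,N}\rightarrow\mathrm{Gal}(K_\mathfrak{f}/H_K)$); and for any element of $\mathcal{F}_N$ finite at $\tau_K$ the image of $tI_2$ acts on its value at $\tau_K$ through the $\mathrm{GL}_2(\mathbb{Z}/N\mathbb{Z})/\{\pm I_2\}$-action on $\mathcal{F}_N$, so that in particular it sends $h(\tau_K)\mapsto h^{tI_2}(\tau_K)$. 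Thus it suffices to show that $h^{tI_2}(\tau_K)=h(\tau_K)$ for every $t$, and for this it is enough to prove the stronger statement $h^{tI_2}=h$ in $\mathcal{F}_N$.

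To prove $h^{tI_2}=h$, I would use the decomposition in $\mathrm{GL}_2(\mathbb{Z}/N\mathbb{Z})/\{\pm I_2\}$
\[
tI_2=\left[\begin{matrix}1&0\\0&t^2\end{matrix}\right]\left[\begin{matrix}t&0\\0&t^{-1}\end{matrix}\right],
\]
whose second factor lies in $\mathrm{SL}_2(\mathbb{Z}/N\mathbb{Z})/\{\pm I_2\}$. The factor $\left[\begin{smallmatrix}1&0\\0&t^2\end{smallmatrix}\right]$ acts on $h$ only through $\sigma_{t^2}$ on the Fourier coefficients, hence trivially because these coefficients are rational. For the factor $\left[\begin{smallmatrix}t&0\\0&t^{-1}\end{smallmatrix}\right]$, whose lower-left entry is $0$, one can choose a preimage in $\mathrm{SL}_2(\mathbb{Z})$ that already lies in $\Gamma_0(N)$ (take an integer $a$ reducing to $t$, an integer $d$ with $ad\equiv1\pmod{N^2}$, write $ad=1+N^2k$, and use $\left[\begin{smallmatrix}a&N\\Nk&d\end{smallmatrix}\right]$), and $h$ is invariant under the fractional linear action of any element of $\Gamma_0(N)$. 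Hence $h^{tI_2}=h$, and the lemma follows.

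The step I expect to demand the most care is the last one: one must confirm that the action of $\mathrm{GL}_2(\mathbb{Z}/N\mathbb{Z})/\{\pm I_2\}$ on $\mathcal{F}_N$ used to compute $h^{tI_2}$ is exactly the action through which $\mathrm{Gal}(K_\mathfrak{f}/H_\mathcal{O})$ operates on $h(\tau_K)$ in Lemma \ref{Shimura}, checking the order of the two factors in the displayed decomposition and the admissibility of the chosen $\Gamma_0(N)$-preimage of the $\mathrm{SL}_2$-part; everything else reduces to bookkeeping with (\ref{CM}), Remark \ref{Galoisgroup}, and Lemma \ref{Shimura}.
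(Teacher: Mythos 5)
Your proposal is correct and takes essentially the same route as the paper: establish $h(\tau_K)\in K_\mathfrak{f}$ via (\ref{CM}), reduce to the generators $tI_2$ of $\mathrm{Gal}(K_\mathfrak{f}/H_\mathcal{O})$ by Lemma \ref{Shimura}, and use the identical decomposition $tI_2=\left[\begin{smallmatrix}1&0\\0&t^2\end{smallmatrix}\right]\left[\begin{smallmatrix}t&0\\0&t^{-1}\end{smallmatrix}\right]$, killing the first factor by rationality of the Fourier coefficients and the second by choosing a preimage in $\Gamma_0(N)$. Your explicit preimage $\left[\begin{smallmatrix}a&N\\Nk&d\end{smallmatrix}\right]$ is a slightly more concrete justification of a step the paper merely asserts, but the argument is the same.
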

\begin{proof}
Since $h(\tau_K)$ belongs to $K_\mathfrak{f}$ by (\ref{CM}), it suffices to show by Lemma \ref{Shimura} that every $tI_2$ with $t\in(\mathbb{Z}/N\mathbb{Z})^\times$ leaves $h(\tau_K)$ fixed. Decompose $tI_2$ into $tI_2=\left[\begin{matrix}1&0\\0&t^2\end{matrix}\right]
\left[\begin{matrix}t&0\\0&t^{-1}\end{matrix}\right]$
in $\mathrm{GL}_2(\mathbb{Z}/N\mathbb{Z})$, and let $\widetilde{\alpha}\in\mathrm{SL}_2(\mathbb{Z})$ be a preimage of $\left[\begin{matrix}
t&0\\0&t^{-1}\end{matrix}\right]\in\mathrm{SL}_2(\mathbb{Z}/N\mathbb{Z})$ with respect to
the reduction $\mathrm{SL}_2(\mathbb{Z})\rightarrow\mathrm{SL}_2(\mathbb{Z}/N\mathbb{Z})/\{\pm I_2\}$.
Then we know $\widetilde{\alpha}\in\Gamma_0(N)$ and achieve that
\begin{eqnarray*}
h(\tau_K)^{tI_2}&=&h^{tI_2}(\tau_K)\quad\textrm{by Lemma \ref{Shimura}}\\
&=&h^{\left[\begin{smallmatrix}1&0\\0&t^2\end{smallmatrix}\right]\widetilde{\alpha}}(\tau_K)\\
&=&h^{\widetilde{\alpha}}(\tau_K)\quad\textrm{since $h(\tau)$ has rational Fourier coefficients}\\
&=&h(\tau_K)\quad\textrm{because}~\widetilde{\alpha}\in\Gamma_0(N).
\end{eqnarray*}
This proves the lemma.
\end{proof}

\begin{proposition}\label{etaliein}
Let $N\geq2$ be an integer and $g(\tau)$ be the $\eta$-quotient described in
\textup{Proposition \ref{etaquotient}}. Then the special value
$g(\tau_K)$ belongs to $H_\mathcal{O}$ as a real algebraic number.
\end{proposition}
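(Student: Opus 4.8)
The plan is to get $g(\tau_K)\in H_\mathcal{O}$ as an immediate consequence of Lemma~\ref{liein}, and then to establish that $g(\tau_K)$ is real by using the complex-conjugation symmetry of a $q$-expansion with rational coefficients. For the first part I would note that, by Proposition~\ref{etaquotient}, $g(\tau)$ is a weakly holomorphic modular function on $\Gamma_0(N)$ with rational Fourier coefficients; in particular it is a meromorphic modular function on $\Gamma_0(N)$, and by the Remark following Lemma~\ref{modularity} it has no zeros or poles on $\mathbb{H}$, so $g(\tau_K)$ is a finite (indeed nonzero) value. Lemma~\ref{liein} then yields $g(\tau_K)\in H_\mathcal{O}$, and since $H_\mathcal{O}$ is a number field this value is algebraic.

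For reality, I would first record a conjugation identity. By the Remark after Lemma~\ref{modularity}, $g(\tau)=\sum_{n\geq n_0}c_nq^n$ with $c_n\in\mathbb{Q}$. The function $\tau\mapsto\overline{g(-\overline{\tau})}$ is holomorphic on $\mathbb{H}$ and, for $\mathrm{Im}(\tau)$ large, equals $\sum_{n\geq n_0}\overline{c_n}\,e^{2\pi in\tau}=\sum_{n\geq n_0}c_nq^n=g(\tau)$; by the identity theorem $\overline{g(\tau)}=g(-\overline{\tau})$ throughout $\mathbb{H}$, so in particular $\overline{g(\tau_K)}=g(-\overline{\tau_K})$. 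Next I would split into the two cases for $d_K$ modulo $4$. If $d_K\equiv0\pmod4$ then $\tau_K=\sqrt{d_K}/2$ is purely imaginary, so $-\overline{\tau_K}=\tau_K$ and reality is immediate. If $d_K\equiv1\pmod4$ then $\overline{\tau_K}=-1-\tau_K$, so $-\overline{\tau_K}=\tau_K+1$, and I would use $\eta(\tau+1)=e^{\pi i/12}\eta(\tau)=\zeta_{24}\eta(\tau)$ together with the fact that each $N/P_S$ is a positive integer to get $\eta((N/P_S)(\tau_K+1))=\zeta_{24}^{N/P_S}\eta((N/P_S)\tau_K)$, whence
\begin{equation*}
g(\tau_K+1)=\zeta_{24}^{\,(24\mu_N/\gcd(24,P_N))\sum_{S\subseteq\{1,\ldots,m\}}(N/P_S)(-1)^{|S|}}\,g(\tau_K).
\end{equation*}
By the alternating-sum computation already carried out in the proof of Proposition~\ref{etaquotient}, $\sum_{S}(N/P_S)(-1)^{|S|}=(N/p_1\cdots p_m)P_N$, so the exponent of $\zeta_{24}$ equals $24\,\mu_N\,(N/p_1\cdots p_m)\,(P_N/\gcd(24,P_N))$, a multiple of $24$ because $p_1\cdots p_m\mid N$ and $\gcd(24,P_N)\mid P_N$. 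Hence $g(\tau_K+1)=g(\tau_K)$, and $\overline{g(\tau_K)}=g(\tau_K)$.

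The only step with any content is the case $d_K\equiv1\pmod4$: one has to check that the $24$th root of unity produced by the translation $\tau\mapsto\tau+1$ disappears, which is precisely a reuse of the alternating-sum identity from Proposition~\ref{etaquotient}; everything else is a direct citation of Lemma~\ref{liein} and the standard transformation $\eta(\tau+1)=\zeta_{24}\eta(\tau)$. So I expect no real obstacle beyond this bookkeeping.
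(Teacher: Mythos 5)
Your proof is correct and follows essentially the same route as the paper: membership in $H_\mathcal{O}$ is exactly Proposition \ref{etaquotient} combined with Lemma \ref{liein}, and reality rests on the rationality of the $q$-expansion. The paper disposes of reality in one line by observing that $q=e^{2\pi i\tau_K}$ is itself a real number in both cases of (\ref{tau_K}); your conjugation-symmetry argument is equivalent to this, and the eta-transformation bookkeeping you carry out for $d_K\equiv1\pmod{4}$ could be skipped entirely, since $g(\tau+1)=g(\tau)$ already follows from $g$ being modular on $\Gamma_0(N)\ni\left[\begin{smallmatrix}1&1\\0&1\end{smallmatrix}\right]$ (equivalently, from its $q$-expansion being in integral powers of $q$).
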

\begin{proof}
By Proposition \ref{etaquotient} and Lemma \ref{liein}, $g(\tau_K)$ lies in $H_\mathcal{O}$. Furthermore, since $g(\tau)$ has rational Fourier coefficients with respect to $q$ and
$e^{2\pi i\tau_K}$ is a real number, $g(\tau_K)$ is a real algebraic number.
\end{proof}

\section {Characters on class groups}

Let $N$ be a positive integer, $\mathfrak{f}=N\mathcal{O}_K$ and
\begin{equation*}
\pi_\mathfrak{f}:\mathcal{O}_K\rightarrow\mathcal{O}_K/\mathfrak{f}
\end{equation*}
be the canonical homomorphism. Define group homomorphisms
\begin{eqnarray*}
\widetilde{\Phi}_\mathfrak{f}:\pi_\mathfrak{f}(\mathcal{O}_K)^\times&\rightarrow&\mathrm{Cl}(\mathfrak{f})\\
x+\mathfrak{f}&\mapsto&[x\mathcal{O}_K],~\textrm{the class containing}~x\mathcal{O}_K
\end{eqnarray*}
and
\begin{equation*}
\widetilde{\Psi}_\mathfrak{f}:\pi_\mathfrak{f}(\mathcal{O}_K)^\times\rightarrow
I_K(\mathfrak{f})/P_{K,\mathbb{Z}}(\mathfrak{f})
\end{equation*}
as the composition of $\widetilde{\Phi}_\mathfrak{f}$ and
the natural surjection $\mathrm{Cl}(\mathfrak{f})=I_K(\mathfrak{f})/P_{K,1}(\mathfrak{f})\rightarrow I_K(\mathfrak{f})/
P_{K,\mathbb{Z}}(\mathfrak{f})$.

\begin{lemma}\label{Ker}
We have
\begin{equation*}
\mathrm{Ker}(\widetilde{\Psi}_\mathfrak{f})=\pi_\mathfrak{f}(\mathcal{O}_K^\times)\pi_\mathfrak{f}(\mathbb{Z})^\times.
\end{equation*}
\end{lemma}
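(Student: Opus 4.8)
The plan is to compute $\mathrm{Ker}(\widetilde{\Psi}_\mathfrak{f})$ by exploiting the very definition of $\widetilde{\Psi}_\mathfrak{f}$ as a composition: $\widetilde{\Psi}_\mathfrak{f}=p\circ\widetilde{\Phi}_\mathfrak{f}$, where $p:\mathrm{Cl}(\mathfrak{f})=I_K(\mathfrak{f})/P_{K,1}(\mathfrak{f})\rightarrow I_K(\mathfrak{f})/P_{K,\mathbb{Z}}(\mathfrak{f})$ is the natural surjection, whose kernel is $P_{K,\mathbb{Z}}(\mathfrak{f})/P_{K,1}(\mathfrak{f})$. Hence $\mathrm{Ker}(\widetilde{\Psi}_\mathfrak{f})=\widetilde{\Phi}_\mathfrak{f}^{-1}\big(P_{K,\mathbb{Z}}(\mathfrak{f})/P_{K,1}(\mathfrak{f})\big)$, and the proof splits into two independent parts: (a) identify $\mathrm{Ker}(\widetilde{\Phi}_\mathfrak{f})$, and (b) identify the subgroup $P_{K,\mathbb{Z}}(\mathfrak{f})/P_{K,1}(\mathfrak{f})$ of $\mathrm{Cl}(\mathfrak{f})$ in terms of $\widetilde{\Phi}_\mathfrak{f}$. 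Then a one-line group-theoretic manipulation finishes the argument.

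For (a) I would show $\mathrm{Ker}(\widetilde{\Phi}_\mathfrak{f})=\pi_\mathfrak{f}(\mathcal{O}_K^\times)$. The inclusion $\supseteq$ is immediate, since for $u\in\mathcal{O}_K^\times$ the ideal $u\mathcal{O}_K=\mathcal{O}_K$ is the identity of $I_K(\mathfrak{f})$. For $\subseteq$, suppose $x\in\mathcal{O}_K$ is prime to $\mathfrak{f}$ with $x\mathcal{O}_K\in P_{K,1}(\mathfrak{f})$; writing this principal ideal as a product of generators $z_i\mathcal{O}_K$ (with $z_i\in\mathcal{O}_K$, $z_i\equiv1\pmod{\mathfrak{f}}$) and their inverses and collecting the two signs, one gets $x\mathcal{O}_K=(a\mathcal{O}_K)(b\mathcal{O}_K)^{-1}$ with $a,b\in\mathcal{O}_K$ and $a\equiv b\equiv1\pmod{\mathfrak{f}}$. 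Thus $xb\mathcal{O}_K=a\mathcal{O}_K$ as \emph{integral} principal $\mathcal{O}_K$-ideals, so $u:=a/(xb)\in\mathcal{O}_K^\times$, and reducing $a=uxb$ modulo $\mathfrak{f}$ yields $1=\pi_\mathfrak{f}(u)\pi_\mathfrak{f}(x)$, i.e. $\pi_\mathfrak{f}(x)=\pi_\mathfrak{f}(u^{-1})\in\pi_\mathfrak{f}(\mathcal{O}_K^\times)$. (This is the standard exactness of $\mathcal{O}_K^\times\rightarrow(\mathcal{O}_K/\mathfrak{f})^\times\rightarrow\mathrm{Cl}(\mathfrak{f})$ and could alternatively be quoted from Cox or Janusz.)

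For (b) I would show $P_{K,\mathbb{Z}}(\mathfrak{f})/P_{K,1}(\mathfrak{f})=\widetilde{\Phi}_\mathfrak{f}\big(\pi_\mathfrak{f}(\mathbb{Z})^\times\big)$. Indeed $P_{K,\mathbb{Z}}(\mathfrak{f})$ is generated by the ideals $y\mathcal{O}_K$ with $y\in\mathcal{O}_K$ and $y\equiv n\pmod{\mathfrak{f}}$ for some $n\in\mathbb{Z}$ prime to $N$; modulo $P_{K,1}(\mathfrak{f})$ such a generator becomes $\widetilde{\Phi}_\mathfrak{f}(\pi_\mathfrak{f}(y))=\widetilde{\Phi}_\mathfrak{f}(\pi_\mathfrak{f}(n))$, and conversely every $\pi_\mathfrak{f}(n)$ with $n\in\mathbb{Z}$ prime to $N$ arises in this way (take $y=n$). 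Since $\{\pi_\mathfrak{f}(n)\mid n\in\mathbb{Z},\ \gcd(n,N)=1\}$ is exactly the group $\pi_\mathfrak{f}(\mathbb{Z})^\times$ and $\widetilde{\Phi}_\mathfrak{f}$ is a homomorphism, the subgroup of $\mathrm{Cl}(\mathfrak{f})$ generated by these classes is precisely $\widetilde{\Phi}_\mathfrak{f}(\pi_\mathfrak{f}(\mathbb{Z})^\times)$, as claimed.

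Combining (a) and (b) with the elementary identity $\phi^{-1}(\phi(A))=A\cdot\mathrm{Ker}(\phi)$ for a group homomorphism $\phi$ and a subgroup $A$ of its domain, we obtain
\[
\mathrm{Ker}(\widetilde{\Psi}_\mathfrak{f})=\widetilde{\Phi}_\mathfrak{f}^{-1}\big(\widetilde{\Phi}_\mathfrak{f}(\pi_\mathfrak{f}(\mathbb{Z})^\times)\big)=\mathrm{Ker}(\widetilde{\Phi}_\mathfrak{f})\cdot\pi_\mathfrak{f}(\mathbb{Z})^\times=\pi_\mathfrak{f}(\mathcal{O}_K^\times)\pi_\mathfrak{f}(\mathbb{Z})^\times,
\]
which is the assertion. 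I expect the only mildly delicate point to be the bookkeeping in (a): after clearing the "denominators" among the generators $z_i\mathcal{O}_K$ one must be comparing genuinely integral principal ideals $xb\mathcal{O}_K$ and $a\mathcal{O}_K$, so that their quotient is an honest unit of $\mathcal{O}_K$ and the congruences $a\equiv b\equiv1$ make literal sense in $\mathcal{O}_K/\mathfrak{f}$; once this is arranged, everything else is purely formal.
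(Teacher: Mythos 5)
Your proof is correct, and in substance it performs the same computation as the paper's, just organized differently. The paper argues by a single chain of equivalences on an element $x+\mathfrak{f}$: membership in $\mathrm{Ker}(\widetilde{\Psi}_\mathfrak{f})$ is unwound to the condition $x\zeta\equiv n\pmod{\mathfrak{f}}$ for some $\zeta\in\mathcal{O}_K^\times$ and $n\in\mathbb{Z}$ prime to $N$, which is then read off as membership in $\pi_\mathfrak{f}(\mathcal{O}_K^\times)\pi_\mathfrak{f}(\mathbb{Z})^\times$. You instead factor $\widetilde{\Psi}_\mathfrak{f}$ through $\mathrm{Cl}(\mathfrak{f})$, compute $\mathrm{Ker}(\widetilde{\Phi}_\mathfrak{f})=\pi_\mathfrak{f}(\mathcal{O}_K^\times)$ and identify the kernel $P_{K,\mathbb{Z}}(\mathfrak{f})/P_{K,1}(\mathfrak{f})$ of the natural surjection with $\widetilde{\Phi}_\mathfrak{f}\bigl(\pi_\mathfrak{f}(\mathbb{Z})^\times\bigr)$, and then combine via $\widetilde{\Phi}_\mathfrak{f}^{-1}\bigl(\widetilde{\Phi}_\mathfrak{f}(A)\bigr)=A\cdot\mathrm{Ker}(\widetilde{\Phi}_\mathfrak{f})$. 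The one step the paper asserts without comment --- that $x\mathcal{O}_K\in P_{K,\mathbb{Z}}(\mathfrak{f})$ (a condition on a product of generating ideals) forces a congruence on $x$ itself up to a unit of $\mathcal{O}_K$ --- is precisely what your part (a) handles by clearing denominators among the generators and extracting the unit $u=a/(xb)$; so your write-up supplies the bookkeeping the paper's terse equivalence chain leaves to the reader. What the paper's version buys is brevity; what yours buys is that the only genuinely non-formal ingredient (the exactness of $\mathcal{O}_K^\times\rightarrow(\mathcal{O}_K/\mathfrak{f})^\times\rightarrow\mathrm{Cl}(\mathfrak{f})$) is isolated and proved rather than absorbed into an ``$\Longleftrightarrow$''.
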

\begin{proof}
We deduce that
\begin{eqnarray*}
&&x+\mathfrak{f}\in\pi_\mathfrak{f}(\mathcal{O}_K)^\times~\textrm{belongs to}~\mathrm{Ker}(\widetilde{\Psi}_\mathfrak{f})\\
&\Longleftrightarrow&x\mathcal{O}_K\in P_{K,\mathbb{Z}}(\mathfrak{f})\\
&\Longleftrightarrow&x\zeta\equiv n\pmod{\mathfrak{f}}~\textrm{for some}~\zeta\in\mathcal{O}_K^\times~\textrm{and}~n\in\mathbb{Z}~\textrm{prime to}~N\\
&\Longleftrightarrow&x+\mathfrak{f}=\zeta^{-1}n+\mathfrak{f}\\
&\Longleftrightarrow&x+\mathfrak{f}\in\pi_\mathfrak{f}(\mathcal{O}_K^\times)\pi_\mathfrak{f}(\mathbb{Z})^\times.
\end{eqnarray*}
This proves the lemma.
\end{proof}

\begin{lemma}\label{classical}
Let $G$ be a finite abelian group and
$H$ be a subgroup of $G$. Let $g\in G$ and $n$
be the smallest positive integer such that $g^n\in H$.
If $\chi$ is a character of $H$, then it can be extended
to a character $\chi'$ of $G$ for which $\chi'(g)$
is any $n$-th root of $\chi(g^n)$.
\end{lemma}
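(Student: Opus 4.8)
The plan is to build the extension in two stages: first extend $\chi$ to the subgroup $G_0=\langle H,g\rangle$ generated by $H$ and $g$, arranging the prescribed value at $g$, and then extend further from $G_0$ up to $G$ without regard to any particular value.

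For the first stage, observe that in the quotient $G/H$ the coset $gH$ has order exactly $n$, precisely because $n$ is by hypothesis the least positive integer with $g^n\in H$ (equivalently, $\{m\in\mathbb{Z}~|~g^m\in H\}=n\mathbb{Z}$). Hence $G_0/H$ is cyclic of order $n$ and every element of $G_0$ can be written, non-uniquely, as $g^kh$ with $k\in\mathbb{Z}$ and $h\in H$. Fix any $n$-th root $\zeta$ of $\chi(g^n)$ and set
\begin{equation*}
\chi'(g^kh)=\zeta^k\chi(h)\qquad(k\in\mathbb{Z},~h\in H).
\end{equation*}
I would then check this is well defined: if $g^kh=g^{k'}h'$, then $g^{k-k'}=h'h^{-1}\in H$, so $n\mid k-k'$, say $k-k'=nq$; therefore $h'h^{-1}=(g^n)^q$, and applying $\chi$ yields $\chi(h')\chi(h)^{-1}=\chi(g^n)^q=\zeta^{nq}=\zeta^{k-k'}$, that is, $\zeta^{k'}\chi(h')=\zeta^k\chi(h)$. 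That $\chi'$ is a homomorphism is immediate from $(g^kh)(g^{k'}h')=g^{k+k'}(hh')$; it restricts to $\chi$ on $H$ by taking $k=0$; and $\chi'(g)=\zeta$ is the chosen root of $\chi(g^n)$.

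For the second stage, I would invoke the standard fact that $\mathbb{C}^\times$ is a divisible abelian group, hence an injective $\mathbb{Z}$-module, so that the restriction map $\mathrm{Hom}(G,\mathbb{C}^\times)\to\mathrm{Hom}(G_0,\mathbb{C}^\times)$ is surjective; alternatively, one inserts a chain $G_0\subsetneq G_1\subsetneq\cdots\subsetneq G_r=G$ in which each $G_{i+1}$ is generated by $G_i$ together with one more element, and applies the first-stage construction repeatedly, now with an arbitrary choice of root at each step. Since these later steps leave the value at $g$ untouched, the resulting character $\chi'$ of $G$ still satisfies $\chi'(g)=\zeta$.

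There is no deep obstacle here; the one point that genuinely needs care is the well-definedness verification in the first stage, which hinges on the two facts that $\zeta^n=\chi(g^n)$ by the choice of $\zeta$ and that $gH$ has order exactly $n$ in $G/H$ by the minimality of $n$. Everything else is formal.
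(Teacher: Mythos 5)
Your proof is correct and is precisely the classical argument (extend first to $\langle H,g\rangle$ via $\chi'(g^kh)=\zeta^k\chi(h)$, then to all of $G$ by divisibility of $\mathbb{C}^\times$ or by iteration); the paper gives no proof of its own, only a citation to Serre, where this same argument appears. The well-definedness check, which is the only delicate point, is carried out correctly.
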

\begin{proof}
See \cite[Chapter VI]{Serre}.
\end{proof}

Let $\mathfrak{a}$ be any nontrivial ideal of $\mathcal{O}_K$ and
$\chi$ be a character of $(\mathcal{O}_K/\mathfrak{a})^\times$.
Recall that the conductor $\mathfrak{f}_\chi$ of $\chi$ is defined by
\begin{align}
&&\mathfrak{f}_\chi=\gcd\{\textrm{nontrivial ideals $\mathfrak{m}$ of $\mathcal{O}_K$}~|~\chi(\alpha+\mathfrak{a})=1~\textrm{for all $\alpha
\in\mathcal{O}_K$ such that}\label{conddef}\\
&&\textrm{$\alpha\mathcal{O}_K$ is prime to $\mathfrak{a}$
and $\alpha\equiv1\pmod{\mathfrak{m}}$}\}. \nonumber
\end{align}
In particular, $\mathfrak{f}_\chi$ divides $\mathfrak{a}$.

\begin{lemma}\label{divide}
Let $\mathfrak{g}=p^r\mathcal{O}_K$ for a prime $p$ and a positive
integer $r$. Let $\chi$ be a nonprincipal character of
$\pi_\mathfrak{g}(\mathcal{O}_K)^\times$ which is trivial on
$\pi_\mathfrak{g}(\mathbb{Z})^\times$. Then the conductor
$\mathfrak{f}_\chi$ of $\chi$ is divisible by every prime ideal
factor of $p\mathcal{O}_K$.
\end{lemma}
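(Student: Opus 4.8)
The plan is to argue by contraposition through the structure of $(\mathcal{O}_K/p^r\mathcal{O}_K)^\times$ and the way the prime factorization of $p\mathcal{O}_K$ controls this group. Split into the three cases for how $p$ behaves in $K$: $p$ splits as $\mathfrak{p}\overline{\mathfrak{p}}$, $p$ is inert, or $p$ ramifies as $\mathfrak{p}^2$. In the inert case there is a single prime ideal factor $\mathfrak{p}=p\mathcal{O}_K$, and the claim is just that $\mathfrak{f}_\chi$ is divisible by $\mathfrak{p}$, i.e.\ that $\chi$ is nonprincipal — which is our hypothesis. In the ramified case the single prime factor is $\mathfrak{p}$ with $\mathfrak{p}^2=p\mathcal{O}_K$, and again $\mathfrak{f}_\chi\mid p^r\mathcal{O}_K=\mathfrak{p}^{2r}$ is a power of $\mathfrak{p}$, so $\mathfrak{f}_\chi$ nontrivial forces $\mathfrak{p}\mid\mathfrak{f}_\chi$. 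So the only genuine content is the split case, where we must rule out $\mathfrak{f}_\chi$ being a pure power of $\mathfrak{p}$ (or of $\overline{\mathfrak{p}}$) alone.

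So suppose $p$ splits and, for contradiction, that $\mathfrak{f}_\chi=\mathfrak{p}^s$ for some $s\le r$ (the case $\overline{\mathfrak{p}}$ is symmetric). By the Chinese Remainder Theorem, $\pi_\mathfrak{g}(\mathcal{O}_K)^\times\cong(\mathcal{O}_K/\mathfrak{p}^r)^\times\times(\mathcal{O}_K/\overline{\mathfrak{p}}^r)^\times$, and the assumption on the conductor means $\chi$ factors through the first projection, i.e.\ $\chi$ is trivial on $1\times(\mathcal{O}_K/\overline{\mathfrak{p}}^r)^\times$. Now I would use the hypothesis that $\chi$ is trivial on $\pi_\mathfrak{g}(\mathbb{Z})^\times$: a rational integer $n$ prime to $p$ maps to the pair $(n\bmod\mathfrak{p}^r,\, n\bmod\overline{\mathfrak{p}}^r)$ with \emph{equal} components under the natural identification of both residue rings with $\mathbb{Z}/p^r\mathbb{Z}$ (via $\mathbb{Z}\hookrightarrow\mathcal{O}_K\twoheadrightarrow\mathcal{O}_K/\mathfrak{p}^r$, which is an isomorphism since $f=1$). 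Hence for such $n$, $1=\chi(\pi_\mathfrak{g}(n))=\chi_1(n)\chi_2(n)$ where $\chi_i$ are the components; combined with $\chi_2=1$ this gives $\chi_1(n)=1$ for all $n$ prime to $p$, i.e.\ $\chi_1$ is trivial on the image of $(\mathbb{Z}/p^r\mathbb{Z})^\times$. But that image is \emph{all} of $(\mathcal{O}_K/\mathfrak{p}^r)^\times$ again because residue degree is $1$, so $\chi_1$ is trivial, whence $\chi$ itself is principal — contradicting the hypothesis. The same argument with $\mathfrak{p}$ and $\overline{\mathfrak{p}}$ interchanged rules out $\mathfrak{f}_\chi$ being a power of $\overline{\mathfrak{p}}$ only. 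Therefore $\mathfrak{f}_\chi$ must be divisible by both $\mathfrak{p}$ and $\overline{\mathfrak{p}}$, i.e.\ by every prime ideal factor of $p\mathcal{O}_K$.

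The main obstacle I anticipate is being careful about the bookkeeping in the split case — specifically, making the identification ``$n\in\mathbb{Z}$ prime to $p$ has equal components in $\mathcal{O}_K/\mathfrak{p}^r$ and $\mathcal{O}_K/\overline{\mathfrak{p}}^r$'' fully precise, since this is exactly where the residue-degree-one property of split primes is used and where the hypothesis on $\pi_\mathfrak{g}(\mathbb{Z})^\times$ bites. One should also double-check the reading of the conductor definition (\ref{conddef}): $\mathfrak{f}_\chi$ divides $\mathfrak{g}=p^r\mathcal{O}_K$, so a priori $\mathfrak{f}_\chi=\mathfrak{p}^a\overline{\mathfrak{p}}^b$ with $0\le a,b\le r$, and the claim ``divisible by every prime factor of $p\mathcal{O}_K$'' is precisely $a\ge1$ and $b\ge1$; the contraposition to be refuted is ``$a=0$ or $b=0$,'' which the above handles by symmetry. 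Everything else is routine CRT and character theory, with Lemma \ref{Ker} available if one prefers to phrase the triviality-on-$\pi_\mathfrak{f}(\mathbb{Z})^\times$ condition in terms of kernels.
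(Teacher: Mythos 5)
Your proof is correct, and in the only substantive case (when $p$ splits) it runs on a different engine from the paper's. The paper likewise dismisses the inert and ramified cases as immediate and, in the split case, supposes $\mathfrak{f}_\chi$ is a power of $\mathfrak{p}$ alone; but it derives the contradiction via complex conjugation and the norm: for $\alpha\equiv1\pmod{\overline{\mathfrak{p}}^{\,r}}$ one has $\overline{\alpha}\equiv1\pmod{\mathfrak{p}^{r}}$, so $\chi(\overline{\alpha}+\mathfrak{g})=1$ by the conductor hypothesis, while $\chi(\alpha+\mathfrak{g})\chi(\overline{\alpha}+\mathfrak{g})=\chi(\mathrm{N}_{K/\mathbb{Q}}(\alpha)+\mathfrak{g})=1$ because $\mathrm{N}_{K/\mathbb{Q}}(\alpha)\in\mathbb{Z}$ and $\chi$ is trivial on $\pi_\mathfrak{g}(\mathbb{Z})^\times$; hence $\chi(\alpha+\mathfrak{g})=1$ and $\mathfrak{f}_\chi$ divides $\overline{\mathfrak{p}}^{\,r}$, a contradiction. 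You instead split $\pi_\mathfrak{g}(\mathcal{O}_K)^\times$ by the Chinese remainder theorem and exploit that $\mathfrak{p}$ has residue degree one, so $\mathbb{Z}$ surjects onto $\mathcal{O}_K/\mathfrak{p}^r$ and triviality on $\pi_\mathfrak{g}(\mathbb{Z})^\times$ forces the $\mathfrak{p}$-component $\chi_1$ to be trivial outright, making $\chi$ principal. Both arguments are complete; the paper's norm trick is shorter and never names the CRT factors, whereas yours is more structural, makes explicit exactly where residue degree one enters, and yields the slightly sharper statement that a character trivial on $\pi_\mathfrak{g}(\mathbb{Z})^\times$ whose conductor is supported at only one of the two split primes must already be principal. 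One small common point worth noting: both you and the paper use the standard fact that the set of defining moduli in (\ref{conddef}) is closed under gcd, so that $\mathfrak{f}_\chi\mid\mathfrak{p}^r$ really does give $\chi(\alpha+\mathfrak{g})=1$ for all admissible $\alpha\equiv1\pmod{\mathfrak{p}^r}$; this is harmless but deserves a word in a fully written-up version.
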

\begin{proof}
Since $\chi$ is nonprincipal, the assertion is obvious if $p$ ramifies or is inert in $K$.
\par
Now let $p$ split in $K$, so $p\mathcal{O}_K$ has prime ideal
factorization $p\mathcal{O}_K=\mathfrak{p}\overline{\mathfrak{p}}$
with $\mathfrak{p}\neq\overline{\mathfrak{p}}$. Without loss of
generality we may assume that $\mathfrak{p}$ divides
$\mathfrak{f}_\chi$. Suppose on the contrary that
$\overline{\mathfrak{p}}$ does not divide $\mathfrak{f}_\chi$, and
hence $\mathfrak{f}_\chi$ is a nonzero power of $\mathfrak{p}$. Let
$\alpha$ be any element of $\mathcal{O}_K$ such that
$\alpha\mathcal{O}_K$ is prime to $\mathfrak{g}$ and
$\alpha\equiv1\pmod{\overline{\mathfrak{p}}^{~r}}$. Since
$\overline{\alpha}\equiv1\pmod{\mathfrak{p}^{r}}$ and
$\mathfrak{f}_\chi$ divides $\mathfrak{p}^r$, we get by the definition (\ref{conddef})
\begin{equation}\label{bar}
\chi(\overline{\alpha}+\mathfrak{g})=1.
\end{equation}
On the other hand,
since $\mathrm{N}_{K/\mathbb{Q}}(\alpha)\in\mathbb{Z}$ and
$\chi$ is trivial on $\pi_\mathfrak{g}(\mathbb{Z})^\times$,
we find
\begin{eqnarray*}
\chi(\alpha+\mathfrak{g})\chi(\overline{\alpha}+\mathfrak{g})=\chi((\alpha+\mathfrak{g})
(\overline{\alpha}+\mathfrak{g}))
=\chi(\mathrm{N}_{K/\mathbb{Q}}(\alpha)+\mathfrak{g})
=1.
\end{eqnarray*}
It then follows from (\ref{bar}) that
\begin{equation*}
\chi(\alpha+\mathfrak{g})=1,
\end{equation*}
which implies that $\mathfrak{f}_\chi$ divides
$\overline{\mathfrak{p}}^{~r}$. But this contradicts the fact that
$\mathfrak{f}_\chi$ is a nonzero power of $\mathfrak{p}$, and so
$\mathfrak{f}_\chi$ is divisible by both $\mathfrak{p}$ and
$\overline{\mathfrak{p}}$.
\end{proof}

For any intermediate field $F$ of the extension $K_\mathfrak{f}/K$ we mean
by $\mathrm{Cl}(K_\mathfrak{f}/F)$
the subgroup of $\mathrm{Cl}(\mathfrak{f})$ ($\simeq\mathrm{Gal}(K_\mathfrak{f}/K)$)
corresponding to $\mathrm{Gal}(K_\mathfrak{f}/F)$.

\begin{lemma}\label{constructcharacter}
For an integer $N\geq2$ with prime factorization $N=\prod_{k=1}^m
p_k^{r_k}$,
let $\mathcal{O}$ be the order of conductor $N$ in $K$.
Let $C\in\mathrm{Cl}(\mathfrak{f})\setminus\mathrm{Cl}(K_\mathfrak{f}/H_\mathcal{O})$ \textup{(}if any\textup{)}.
Assume that the order of the group
\begin{equation*}
G_k=\pi_{p_k^{r_k}\mathcal{O}_K}(\mathcal{O}_K)^\times/
\pi_{p_k^{r_k}\mathcal{O}_K}(\mathcal{O}_K^\times)
\pi_{p_k^{r_k}\mathcal{O}_K}(\mathbb{Z})^\times
\end{equation*}
is greater than $2$ for each $k\in\{1,\ldots,m\}$.
Then there exists a character $\chi$ of $\mathrm{Cl}(\mathfrak{f})$ satisfying
\begin{itemize}
\item[\textup{(i)}] $\chi$ is trivial on $\mathrm{Cl}(K_\mathfrak{f}/H_\mathcal{O})$,
\item[\textup{(ii)}] $\chi(C)\neq1$,
\item[\textup{(iii)}] every prime ideal factor of $\mathfrak{f}$
divides the conductor $\mathfrak{f}_\chi$ of $\chi$.
\end{itemize}
\end{lemma}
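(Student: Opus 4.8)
The plan is to separate the arithmetic input from the combinatorics: using Lemmas \ref{Ker} and \ref{divide} I reduce conditions (i)--(iii) to a statement about characters of a product of finite abelian groups, and then prove that statement by an explicit construction, invoking Lemma \ref{classical} to extend characters from subgroups. First, since $\mathrm{Gal}(H_\mathcal{O}/K)\simeq I_K(\mathfrak{f})/P_{K,\mathbb{Z}}(\mathfrak{f})$ by (\ref{GalH_O}) and $H_\mathcal{O}\subseteq K_\mathfrak{f}$, the group $\mathrm{Cl}(K_\mathfrak{f}/H_\mathcal{O})$ equals $P_{K,\mathbb{Z}}(\mathfrak{f})/P_{K,1}(\mathfrak{f})$, so (i) says precisely that the sought $\chi$ is inflated from a character $\psi_0$ of $\mathrm{Cl}_\mathbb{Z}(\mathfrak{f}):=I_K(\mathfrak{f})/P_{K,\mathbb{Z}}(\mathfrak{f})$, and then (ii) becomes $\psi_0(\overline{C})\neq1$, where $\overline{C}\neq1$ is the image of $C$ in $\mathrm{Cl}_\mathbb{Z}(\mathfrak{f})$. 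By the Chinese Remainder Theorem, writing $A_k=\pi_{p_k^{r_k}\mathcal{O}_K}(\mathcal{O}_K)^\times/\pi_{p_k^{r_k}\mathcal{O}_K}(\mathbb{Z})^\times$, the group $T:=\pi_\mathfrak{f}(\mathcal{O}_K)^\times/\pi_\mathfrak{f}(\mathbb{Z})^\times$ is canonically $\prod_{k=1}^m A_k$; let $v_k\in A_k$ be the image of a fixed generator of the cyclic group $\mathcal{O}_K^\times$ and put $W=\langle(v_1,\dots,v_m)\rangle\leq T$, so that $A_k/\langle v_k\rangle\simeq G_k$ and the hypothesis reads $|A_k/\langle v_k\rangle|>2$ for every $k$. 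By Lemma \ref{Ker}, $\widetilde{\Psi}_\mathfrak{f}$ induces an injection $\lambda:T/W\hookrightarrow\mathrm{Cl}_\mathbb{Z}(\mathfrak{f})$.

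To deal with (iii), note that $\chi\circ\widetilde{\Phi}_\mathfrak{f}=\psi_0\circ\widetilde{\Psi}_\mathfrak{f}$ is a character of $\pi_\mathfrak{f}(\mathcal{O}_K)^\times$ trivial on $\pi_\mathfrak{f}(\mathbb{Z})^\times$, hence a product $\prod_k\chi^{(k)}$ with each $\chi^{(k)}$ a character of $\pi_{p_k^{r_k}\mathcal{O}_K}(\mathcal{O}_K)^\times$ trivial on $\pi_{p_k^{r_k}\mathcal{O}_K}(\mathbb{Z})^\times$; since $\mathfrak{f}_\chi$ is the conductor of $\chi\circ\widetilde{\Phi}_\mathfrak{f}$, which by (\ref{conddef}) is the product of the conductors of the $\chi^{(k)}$, Lemma \ref{divide} yields (iii) as soon as each $\chi^{(k)}$ is nonprincipal, equivalently, as soon as the character $\theta_k$ of $A_k$ it induces is nontrivial. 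Thus it suffices to produce a character $\theta=\prod_{k=1}^m\theta_k$ of $T$ with every $\theta_k\in\widehat{A_k}$ nontrivial, with $\theta$ trivial on $W$, and---whenever $\overline{C}=\lambda(\overline{\xi})$ for some $\xi\in T\setminus W$---with $\theta(\xi)\neq1$. Indeed, given such a $\theta$, let $\overline{\theta}$ be the character of $T/W$ it induces, set $\eta=\overline{\theta}\circ\lambda^{-1}$ on $\lambda(T/W)$, and extend $\eta$ to a character $\psi_0$ of $\mathrm{Cl}_\mathbb{Z}(\mathfrak{f})$ by Lemma \ref{classical}: when $\overline{C}\notin\lambda(T/W)$ take $\psi_0(\overline{C})$ to be a nontrivial $n$-th root of $\eta(\overline{C}^{\,n})$---a primitive one if this value is $1$---where $n\geq2$ is the least positive integer with $\overline{C}^{\,n}\in\lambda(T/W)$; when $\overline{C}\in\lambda(T/W)$ one has $\psi_0(\overline{C})=\eta(\overline{C})=\theta(\xi)\neq1$ automatically. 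Inflating $\psi_0$ back to a character $\chi$ of $\mathrm{Cl}(\mathfrak{f})$, (i) holds by construction, (ii) holds because $\psi_0(\overline{C})\neq1$, and (iii) holds because $\chi\circ\widetilde{\Phi}_\mathfrak{f}$ induces precisely $\theta$ on $T$, whose components $\theta_k$ are nontrivial.

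It remains to construct $\theta$. If no condition on a preimage of $\overline{C}$ is imposed, take $\theta_k$ to be the inflation to $A_k$ of any nontrivial character of $A_k/\langle v_k\rangle$, which exists since $|A_k/\langle v_k\rangle|>1$. Now suppose $\overline{C}=\lambda(\overline{\xi})$ with $\xi=(\xi_1,\dots,\xi_m)\in T\setminus W$. Suppose first that some $\xi_{k_0}$ is nontrivial modulo $\langle v_{k_0}\rangle$: let $\theta_{k_0}$ inflate a character of $A_{k_0}/\langle v_{k_0}\rangle$ not killing the class of $\xi_{k_0}$; for $k\neq k_0$, if the class of $\xi_k$ does not generate $A_k/\langle v_k\rangle$ let $\theta_k$ inflate a nontrivial character of $A_k/\langle v_k\rangle$ killing that class, and otherwise---so $A_k/\langle v_k\rangle$ is cyclic of order $|G_k|\geq3$, generated by the class of $\xi_k$---let $\theta_k$ inflate a nontrivial character, so that $\theta_k(\xi_k)$ may be prescribed to be any one of the $|G_k|-1\geq2$ nontrivial $|G_k|$-th roots of unity. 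Since $\theta_{k_0}(\xi_{k_0})\neq1$ and every other nontrivial factor of $\prod_k\theta_k(\xi_k)$ comes from such a ``forced'' coordinate, one may fix all but one of the forced values arbitrarily and then choose the last, among its at least two admissible values, so as to avoid the unique value that would make $\prod_k\theta_k(\xi_k)$ equal to $1$. Suppose instead that every $\xi_k$ lies in $\langle v_k\rangle$, say $\xi_k=v_k^{s_k}$: as a system of congruences $x\equiv s_k\pmod{\mathrm{ord}(v_k)}$ is solvable if and only if $s_k\equiv s_j\pmod{\gcd(\mathrm{ord}(v_k),\mathrm{ord}(v_j))}$ for all $k,j$, and $\xi\notin W=\{(v_1^s,\dots,v_m^s):s\in\mathbb{Z}\}$, there are $k_1\neq k_2$ with $s_{k_1}\not\equiv s_{k_2}\pmod{g}$, where $g=\gcd(\mathrm{ord}(v_{k_1}),\mathrm{ord}(v_{k_2}))\geq2$; using Lemma \ref{classical} to extend characters from $\langle v_k\rangle$ to $A_k$, choose $\theta_k\in\widehat{A_k}$ nontrivial with $\theta_{k_1}(v_{k_1})=\zeta_g$, $\theta_{k_2}(v_{k_2})=\zeta_g^{-1}$, and $\theta_k(v_k)=1$ for $k\neq k_1,k_2$ (the last again requiring only $|A_k/\langle v_k\rangle|>1$). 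In every case $\prod_k\theta_k(v_k)=1$, so $\theta$ is trivial on $W$; each $\theta_k\neq1$; and $\theta(\xi)\neq1$---in the last case $\theta(\xi)=\zeta_g^{\,s_{k_1}-s_{k_2}}\neq1$.

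The crux, and the only point at which $|G_k|>2$ is used in place of the weaker $|G_k|>1$, is the sub-case in which $\xi$ has several coordinates whose classes generate the respective cyclic groups $A_k/\langle v_k\rangle$: there the value of $\theta_k$ at $\xi_k$ is forced to be nontrivial, and one needs at least two admissible values at one such coordinate in order to steer the product $\prod_k\theta_k(\xi_k)$ away from $1$. Everything else---the reinterpretation of (i)--(iii) via (\ref{GalH_O}) and Lemma \ref{Ker}, the multiplicativity of the conductor along the factorization of $N$, and the extension steps supplied by Lemma \ref{classical}---is routine.
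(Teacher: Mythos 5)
Your proposal is correct, but it takes a genuinely different route from the paper's. The paper begins with \emph{any} character $\chi$ of $\mathrm{Cl}(\mathfrak{f})$ satisfying (i) and (ii) (obtained from Lemma \ref{classical}), splits $\widetilde{\chi}=\chi\circ\widetilde{\Phi}_\mathfrak{f}$ into local components $\widetilde{\chi}_k$ via the Chinese remainder theorem, and then \emph{iteratively repairs} each principal component by multiplying $\chi$ with a correction character inflated from $G_k$, distinguishing the cases where the exponent of $G_k$ equals $2$ (so that $|G_k|>2$ forces $G_k$ to be non-cyclic and a nonprincipal character killing $[C]^n$ exists) and where it exceeds $2$ (so that one of two candidate corrections avoids cancelling $\chi(C)$); the whole point of that dichotomy is to preserve $\chi(C)\neq1$ while leaving the other local components untouched. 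You instead build the character from scratch: you identify $\mathrm{Cl}(K_\mathfrak{f}/H_\mathcal{O})$ with $P_{K,\mathbb{Z}}(\mathfrak{f})/P_{K,1}(\mathfrak{f})$, reduce everything to producing a product character $\theta=\prod_k\theta_k$ on $T=\prod_k A_k$ with every $\theta_k$ nontrivial, $\theta$ trivial on the diagonal unit subgroup $W$, and $\theta(\xi)\neq1$ when $\overline{C}$ has a preimage $\overline{\xi}$, and only then extend to $\mathrm{Cl}(\mathfrak{f})$ by Lemma \ref{classical}. I checked your case analysis (whether $\overline{C}$ lies in $\lambda(T/W)$; whether some $\xi_{k_0}$ is nontrivial modulo $\langle v_{k_0}\rangle$; the generalized Chinese remainder argument when all $\xi_k\in\langle v_k\rangle$) and it is complete: the ``steering'' of the last forced coordinate is exactly where $|G_k|>2$ enters for you and plays the role of the paper's exponent dichotomy. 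The trade-off is clear: the paper's iterative correction is shorter because it never needs to locate a preimage of $C$ or analyze $W$, while your construction is more explicit and isolates precisely where the hypothesis $|G_k|>2$ is indispensable. Two minor remarks: you claim $\mathfrak{f}_\chi$ \emph{equals} the product of the local conductors, whereas only the divisibility $\prod_k\mathfrak{f}_{\chi^{(k)}}\mid\mathfrak{f}_\chi$ (the easy direction, and all the paper invokes) is needed for (iii); and your extension step for $\overline{C}\notin\lambda(T/W)$ should note explicitly that among the $n\geq2$ distinct $n$-th roots of $\eta(\overline{C}^{\,n})$ at most one equals $1$, which you do implicitly.
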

\begin{proof}
By Lemma \ref{classical} we can take a character $\chi$ of
$\mathrm{Cl}(\mathfrak{f})$ satisfying (i) and (ii).
Observe that the
conductor $\mathfrak{f}_\chi$ of $\chi$ is defined to be that of the
character $\widetilde{\chi}=\chi\circ\widetilde{\Phi}_\mathfrak{f}$
of $\pi_\mathfrak{f}(\mathcal{O}_K)^\times$. By the Chinese
remainder theorem we have the natural isomorphism
\begin{equation*}
\pi_\mathfrak{f}(\mathcal{O}_K)^\times\simeq
\prod_{k=1}^m\pi_{p_k^{r_k}\mathcal{O}_K}(\mathcal{O}_K)^\times.
\end{equation*}
For each $k\in\{1,\ldots,m\}$, let
\begin{equation*}
\widetilde{\iota}_k:\pi_{p_k^{r_k}\mathcal{O}_K}(\mathcal{O}_K)^\times\hookrightarrow
\prod_{k=1}^m\pi_{p_k^{r_k}\mathcal{O}_K}(\mathcal{O}_K)^\times
\stackrel{\sim}{\rightarrow}\pi_\mathfrak{f}(\mathcal{O}_K)^\times
\end{equation*}
be the natural injection and $\widetilde{\chi}_k=\widetilde{\chi}\circ\widetilde{\iota}_k$.
Then it is obvious by the definition (\ref{conddef}) that $\mathfrak{f}_\chi$ is divisible by
$\prod_{k=1}^m\mathfrak{f}_{\widetilde{\chi}_k}$.
Let $\mathcal{O}_k$ be the order of conductor $p_k^{r_k}$ in $K$ (hence, $H_{\mathcal{O}_k}\subseteq H_\mathcal{O}$).
By Lemma \ref{Ker} and (\ref{GalH_O}) we obtain an injection
\begin{align*}
&&G_k
\rightarrow
I_K(p_k^{r_k}\mathcal{O}_K)/P_{K,\mathbb{Z}}(p_k^{r_k}\mathcal{O}_K)
\stackrel{\sim}{\rightarrow}\mathrm{Gal}(H_{\mathcal{O}_k}/K)\stackrel{\sim}{\rightarrow}
\mathrm{Gal}(K_\mathfrak{f}/K)/\mathrm{Gal}(K_\mathfrak{f}/H_{\mathcal{O}_k})\\
&&\stackrel{\sim}{\rightarrow}
\mathrm{Cl}(\mathfrak{f})/\mathrm{Cl}(K_\mathfrak{f}/H_{\mathcal{O}_k}).
\end{align*}
Thus we may identify $G_k$ with
a subgroup of $\mathrm{Cl}(\mathfrak{f})/\mathrm{Cl}(K_\mathfrak{f}/H_{\mathcal{O}_k})$.
\par
Suppose that $\widetilde{\chi}_k=1$ for some $k\in\{1,\ldots,m\}$.
If we let $e_k$ be the exponent of $G_k$, then we are faced with two possible cases.
\begin{itemize}
\item[Case 1.] First, consider the case $e_k=2$.
Let $[C]$ be the class of $C$ in $\mathrm{Cl}(\mathfrak{f})/
\mathrm{Cl}(K_\mathfrak{f}/H_{\mathcal{O}_k})$ and
$n$ be the smallest positive integer so that $[C]^n$ belongs to
$G_k$. Since $e_k=2$ and $|G_k|>2$ by hypothesis, $G_k$ is not a cyclic group. In particular, $G_k\supsetneq\langle[C]^n\rangle$. Hence there
exists a nonprincipal character $\psi$
of $G_k$ such that $\psi([C]^n)=1$
by Lemma \ref{classical}. And, we can extend $\psi$ to a character $\psi'$ of $\mathrm{Cl}(\mathfrak{f})/\mathrm{Cl}(K_\mathfrak{f}/H_{\mathcal{O}_k})$
in such a way that $\psi'([C])=1$ again by Lemma \ref{classical}.
Let
\begin{equation*}
\psi''=\psi'\circ(\mathrm{Cl}(\mathfrak{f})\rightarrow\mathrm{Cl}(\mathfrak{f})/
\mathrm{Cl}(K_\mathfrak{f}/H_{\mathcal{O}_k})),
\end{equation*}
which is trivial on $\mathrm{Cl}(K_\mathfrak{f}/H_{\mathcal{O}_k})$ and $\psi''(C)=1$.
We then see that $\chi\psi''$ is trivial on
$\mathrm{Cl}(K_\mathfrak{f}/H_{\mathcal{O}_k})$ (so, on $\mathrm{Cl}(K_\mathfrak{f}/H_{\mathcal{O}})$) and
\begin{equation*}
(\chi\psi'')(C)=\chi(C)\psi''(C)=
\chi(C)\neq1.
\end{equation*}
Moreover, since $\widetilde{\chi}_k=1$, we get
\begin{equation*}
(\chi\psi'')\circ\widetilde{\Phi}_\mathfrak{f}\circ\widetilde{\iota}_k
=\psi
\circ(\pi_{p_k^{r_k}\mathcal{O}_K}(\mathcal{O}_K)^\times
\rightarrow G_k),
\end{equation*}
which is trivial on $\pi_{p_k^{r_k}\mathcal{O}_K}(\mathcal{O}_K^\times)
\pi_{p_k^{r_k}\mathcal{O}_K}(\mathbb{Z})^\times$
as a nonprincipal character of $\pi_{p_k^{r_k}\mathcal{O}_K}(\mathcal{O}_K)^\times$.
Therefore every prime
ideal factor of $p_k\mathcal{O}_K$ divides the
conductor $\mathfrak{f}_{\chi\psi''}$ of $\chi\psi''$ by Lemma
\ref{divide}. And, we further note that
\begin{equation*}
(\chi\psi'')\circ
\widetilde{\Phi}_\mathfrak{f}\circ\widetilde{\iota}_\ell
=\widetilde{\chi}_\ell\quad\textrm{for all $\ell\in\{1,\ldots,m\}$ such that $\ell\neq k$}
\end{equation*}
by the construction of $\psi''$.
Now, we replace $\chi$ by $\chi\psi''$.
\item[Case 2.] Next, let $e_k>2$.
Then there is a character $\xi$ of $G_k$ such that
$\xi^2\neq1$, because the exponent of the character group of $G_k$ is also
greater than $2$. Extend $\xi$ to a character $\xi'$ of
$\mathrm{Cl}(\mathfrak{f})/\mathrm{Cl}(K_\mathfrak{f}/H_{\mathcal{O}_k})$
by using Lemma \ref{classical}, and let
\begin{equation*}
\xi''=\xi'\circ(\mathrm{Cl}(\mathfrak{f})\rightarrow\mathrm{Cl}(\mathfrak{f})/
\mathrm{Cl}(K_\mathfrak{f}/H_{\mathcal{O}_k})).
\end{equation*}
Then we see that $\xi''$ is trivial on
$\mathrm{Cl}(K_\mathfrak{f}/H_{\mathcal{O}_k})$ and $\xi''^2\neq1$.
Set
\begin{equation*}
\rho=\left\{\begin{array}{ll}
\xi'' & \textrm{if}~\xi''(C)\neq\chi(C)^{-1},\\
\xi''^2 & \textrm{if}~\xi''(C)=\chi(C)^{-1}~(\neq1).
\end{array}\right.
\end{equation*}
As in the above, one can readily check that
 $\chi\rho$ is trivial on
$\mathrm{Cl}(K_\mathfrak{f}/H_{\mathcal{O}})$,
$(\chi\rho)(C)\neq1$,
every prime factor of $p_k\mathcal{O}_K$ divides the conductor $\mathfrak{f}_{\chi\rho}$ of $\chi\rho$ and
\begin{equation*}
(\chi\rho)\circ
\widetilde{\Phi}_\mathfrak{f}\circ\widetilde{\iota}_\ell
=\widetilde{\chi}_\ell\quad\textrm{for all $\ell\in\{1,\ldots,m\}$ such that $\ell\neq k$}.
\end{equation*}
Hence, we replace $\chi$ by $\chi\rho$.
\end{itemize}
Continuing this way we eventually obtain a character $\chi$ of
$\mathrm{Cl}(\mathfrak{f})$ satisfying the conditions (i)$\sim$(iii)
in the lemma.
\end{proof}

\section {Ring class invariants}

For a vector $\left[\begin{matrix}r_1\\r_2\end{matrix}\right]\in\mathbb{Q}^2\setminus\mathbb{Z}^2$
we define the \textit{Siegel function} $g_{\left[\begin{smallmatrix}r_1\\r_2\end{smallmatrix}\right]}(\tau)$ on $\mathbb{H}$ by the following infinite product
\begin{eqnarray*}
g_{\left[\begin{smallmatrix}r_1\\r_2\end{smallmatrix}\right]}(\tau)=
-q^{(1/2)(r_1^2-r_1+1/6)}e^{\pi
ir_2(r_1-1)}(1-q^{r_1}e^{2\pi ir_2}) \prod_{n=1}^{\infty}(1-q^{n+r_1}e^{2\pi ir_2})(1-q^{n-r_1}e^{-2\pi ir_2}).
\end{eqnarray*}
If $M$ ($\geq2$) is an integer so that $Mr_1,Mr_2\in\mathbb{Z}$, then
$g_{\left[\begin{smallmatrix}r_1\\r_2\end{smallmatrix}\right]}(\tau)^{12M}$ belongs to
$\mathcal{F}_M$ and has neither zeros nor poles on $\mathbb{H}$ \cite[Chapter 2, Theorem 1.2]{K-L}.
\par
Let $\mathfrak{f}$ be a nontrivial proper ideal of $\mathcal{O}_K$ and $C\in\mathrm{Cl}(\mathfrak{f})$.
Let $N=N(\mathfrak{f})$ ($\geq2$) be the
smallest positive integer in $\mathfrak{f}$.
Take any integral ideal $\mathfrak{c}\in C$ and let
\begin{eqnarray*}
\mathfrak{f}\mathfrak{c}^{-1}&=&[\omega_1,\omega_2]\quad\textrm{for some}~ \omega_1,\omega_2\in\mathbb{C}~\textrm{such that}~\omega_1/\omega_2\in\mathbb{H},\\
1&=&(a/N)\omega_1+(b/N)\omega_2\quad\textrm{for some}~a,b\in\mathbb{Z}.
\end{eqnarray*}
We define the \textit{Siegel-Ramachandra invariant}
$g_\mathfrak{f}(C)$ of conductor $\mathfrak{f}$ at the class $C$ by
\begin{equation*}
g_\mathfrak{f}(C)=g_{\left[\begin{smallmatrix}a/N\\b/N\end{smallmatrix}\right]}(\omega_1/\omega_2)^{12N},
\end{equation*}
which depends only on $\mathfrak{f}$ and $C$, not on the choice of
$\mathfrak{c}$, $\omega_1$ and $\omega_2$ \cite[Chapter 11,
$\S$1]{K-L}. It lies in $K_\mathfrak{f}$ and satisfies the
transformation formula
\begin{equation}\label{transformation}
g_\mathfrak{f}(C)^{\sigma_\mathfrak{f}(C')}=g_\mathfrak{f}(CC')\quad\textrm{for any}~C'\in\mathrm{Cl}(\mathfrak{f}),
\end{equation}
where
$\sigma_\mathfrak{f}:\mathrm{Cl}(\mathfrak{f})\rightarrow\mathrm{Gal}(K_\mathfrak{f}/K)$
is the Artin reciprocity map \cite[Chapter 11, Theorem 1.1]{K-L}.
Ramachandra \cite{Ramachandra} further showed that
$g_\mathfrak{f}(C)$ is an algebraic integer and a unit if
$\mathfrak{f}$ is not a power of one prime ideal (see also
\cite[$\S$3]{K-S2}).
\par
For a nonprincipal character $\chi$ of $\mathrm{Cl}(\frak{f})$ we define the \textit{Stickelberger
element} and the \textit{$L$-function} as
\begin{equation*}
S_\mathfrak{f}(\chi)
=\sum_{C\in\mathrm{Cl}(\mathfrak{f})}
\chi(C)\ln|g_\mathfrak{f}(C)|
\quad\textrm{and}\quad L_\mathfrak{f}(s,\chi)=\hspace{-0.5cm}\sum_{\begin{smallmatrix}\mathfrak{a}~:~\textrm{nontrivial
ideals of
$\mathcal{O}_K$}\\
\textrm{prime to $\mathfrak{f}$}\end{smallmatrix}}\frac{\chi([\mathfrak{a}])}{\mathrm{N}_{K/\mathbb{Q}}(\mathfrak{a})^s}
\quad(s\in\mathbb{C}),
\end{equation*}
respectively.

\begin{proposition}[The second Kronecker limit formula]\label{Kronecker}
Let $\mathfrak{f}_\chi$ be the conductor of $\chi$ and $\chi_0$ be the proper character
of $\mathrm{Cl}(\mathfrak{f}_\chi)$ corresponding to $\chi$.
If $\mathfrak{f}_\chi\neq\mathcal{O}_K$, then we have
\begin{equation*}
L_{\mathfrak{f}_\chi}(1,\chi_0)
\prod_{\mathfrak{p}|\mathfrak{f},~\mathfrak{p}\nmid~\mathfrak{f}_\chi}
(1-\overline{\chi}_0([\mathfrak{p}]))=-
\frac{\pi\chi_0([\gamma\mathfrak{d}_K\mathfrak{f}_\chi])}{3N(\mathfrak{f}_\chi)\sqrt{-d_K}\omega(\mathfrak{f}_\chi)
T_\gamma(\overline{\chi}_0)}~S_{\mathfrak{f}}(\overline{\chi}),
\end{equation*}
where $\mathfrak{d}_K$ is
the different of $K/\mathbb{Q}$, $\gamma$ is an element of $K$ so that
$\gamma\mathfrak{d}_K\mathfrak{f}_\chi$ becomes an integral ideal of $K$ prime to $\mathfrak{f}_\chi$,
$N(\mathfrak{f}_\chi)$ is the smallest positive integer in $\mathfrak{f}_\chi$,
$\omega(\mathfrak{f}_\chi)
=|\{\zeta\in\mathcal{O}_K^\times~|~\zeta\equiv1\pmod{\mathfrak{f}_\chi}\}|$ and
\begin{eqnarray*}
T_\gamma(\overline{\chi}_0)=
\sum_{x+\mathfrak{f}_\chi\in\pi_{\mathfrak{f}_\chi}(\mathcal{O}_K)^\times}
\overline{\chi}_0([x\mathcal{O}_K])e^{2\pi
i\mathrm{Tr}_{K/\mathbb{Q}}(x\gamma)}.
\end{eqnarray*}
\end{proposition}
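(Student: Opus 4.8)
The plan is to read this off the classical \emph{analytic} second Kronecker limit formula, which evaluates at $s=1$ a twisted Epstein--Kronecker zeta series over a lattice in $\mathbb{C}$, and then perform the purely algebraic translation into ideal-theoretic language. First I would disassemble the Hecke $L$-function: write $L_{\mathfrak{f}_\chi}(s,\chi_0)$ as a sum of ray-class partial zeta functions $\sum_{C}\chi_0([\mathfrak{a}])\mathrm{N}_{K/\mathbb{Q}}(\mathfrak{a})^{-s}$ over classes, and for each class choose an integral representative; dividing out a chosen ideal turns the partial zeta function into a lattice sum over $\mathfrak{f}\mathfrak{c}^{-1}=[\omega_1,\omega_2]$ (exactly the lattice used to define $g_\mathfrak{f}(C)$), subject to a congruence modulo $\mathfrak{f}_\chi$. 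The key bridge from the multiplicative character $\chi_0$ of $(\mathcal{O}_K/\mathfrak{f}_\chi)^\times$ to the additive character $x\mapsto e^{2\pi i\,\mathrm{Tr}_{K/\mathbb{Q}}(x\gamma)}$ that occurs inside a Siegel function is the Gauss sum $T_\gamma(\overline{\chi}_0)$, where $\gamma$ is chosen so that $\gamma\mathfrak{d}_K\mathfrak{f}_\chi$ is integral and prime to $\mathfrak{f}_\chi$; this choice makes the additive character primitive and is precisely what brings in the different $\mathfrak{d}_K$, the factor $\sqrt{-d_K}$, and the class $\chi_0([\gamma\mathfrak{d}_K\mathfrak{f}_\chi])$.

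Next I would apply the classical limit formula (as in \cite[Chapter 20]{Lang} and \cite[Chapter 11]{K-L}) to evaluate each twisted Epstein series at $s=1$ in terms of $\log|g_{[r_1,r_2]}(\omega_1/\omega_2)^{12}|$; raising to the $N$-th power and recalling that $g_\mathfrak{f}(C)$ was defined with the exponent $12N$ identifies the analytic value with $\ln|g_\mathfrak{f}(C)|$. Summing the resulting identity against $\overline{\chi}(C)$ over $\mathrm{Cl}(\mathfrak{f})$ and invoking the transformation formula $(\ref{transformation})$ reorganizes the double sum (over classes and over residues weighted by the Gauss sum) into, on one side, the Stickelberger element $S_{\mathfrak{f}}(\overline{\chi})=\sum_{C}\overline{\chi}(C)\ln|g_\mathfrak{f}(C)|$, and on the other side an ideal sum $\sum_{\mathfrak{a}}\overline{\chi}_0([\mathfrak{a}])\mathrm{N}_{K/\mathbb{Q}}(\mathfrak{a})^{-s}$. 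Because $g_\mathfrak{f}(C)$ lives at the modulus $\mathfrak{f}$, this ideal sum runs over ideals prime to $\mathfrak{f}$ rather than merely to $\mathfrak{f}_\chi$, so passing to $L_{\mathfrak{f}_\chi}(s,\chi_0)$ reinstates exactly the Euler factors at the primes $\mathfrak{p}\mid\mathfrak{f}$, $\mathfrak{p}\nmid\mathfrak{f}_\chi$; at $s=1$ these produce the product $\prod_{\mathfrak{p}\mid\mathfrak{f},\,\mathfrak{p}\nmid\mathfrak{f}_\chi}(1-\overline{\chi}_0([\mathfrak{p}]))$ on the left. It remains to collect constants: the analytic formula contributes a factor $\pi/(\mathrm{Im}(\omega_1/\omega_2)\,|\omega_2|^2)$, the residue ring $\mathcal{O}_K/\mathfrak{f}_\chi$ contributes $N(\mathfrak{f}_\chi)$, the units $\zeta\equiv1\pmod{\mathfrak{f}_\chi}$ overcount by $\omega(\mathfrak{f}_\chi)$, and the exponent $12N$ converts the ubiquitous $1/6$ into $1/3$; together with the Gauss sum and the class $\chi_0([\gamma\mathfrak{d}_K\mathfrak{f}_\chi])$ these assemble into the displayed prefactor $-\pi\chi_0([\gamma\mathfrak{d}_K\mathfrak{f}_\chi])/\bigl(3N(\mathfrak{f}_\chi)\sqrt{-d_K}\,\omega(\mathfrak{f}_\chi)\,T_\gamma(\overline{\chi}_0)\bigr)$.

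The main obstacle is not analytic — the limit formula is entirely off the shelf — but bookkeeping: matching the exact normalization of the Siegel function $g_{[r_1,r_2]}$ (the $q^{(1/2)(r_1^2-r_1+1/6)}e^{\pi i r_2(r_1-1)}$ prefactor) and the chosen ordered basis of $\mathfrak{f}\mathfrak{c}^{-1}$ with $\omega_1/\omega_2\in\mathbb{H}$ to the dual-lattice/additive-character data on $\mathcal{O}_K/\mathfrak{f}_\chi$, so that every power of $2$, $3$, $\pi$ and $\sqrt{-d_K}$, and in particular the conjugation pattern between $\chi_0$ and $\overline{\chi}_0$, lands in the right place. In practice the whole statement can be extracted from \cite[Chapter 11, \S2]{K-L} once this conductor-change step is inserted; one could equally cite that source directly, but the derivation above makes transparent where each ingredient of the formula originates.
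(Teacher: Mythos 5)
Your proposal is correct and follows essentially the same route as the paper: the paper's entire proof is a citation to \cite[Chapter 22, Theorems 1 and 2]{Lang} and \cite[Chapter 11, Theorem 2.1]{K-L}, and your sketch is an accurate unpacking of exactly the derivation contained in those references (partial zeta functions, Gauss-sum bridge to the additive character, the classical limit formula for Siegel functions, and the Euler factors from the conductor change), as you yourself note in closing that one could cite the source directly.
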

\begin{proof}
See \cite[Chapter 22, Theorems 1 and 2]{Lang} and \cite[Chapter 11, Theorem
2.1]{K-L}.
\end{proof}

\begin{remark}\label{Stickremark}
\begin{itemize}
\item[(i)] If every prime ideal factor of $\mathfrak{f}$ divides $\mathfrak{f}_\chi$,
then we understand the Euler factor $\prod_{\mathfrak{p}|\mathfrak{f},~\mathfrak{p}\nmid~\mathfrak{f}_\chi}
(1-\overline{\chi}_0([\mathfrak{p}]))$ to be $1$.
\item[(ii)] Since $\chi_0$ is a nonprincipal character of $\mathrm{Cl}(\mathfrak{f}_\chi)$, we get $L_{\mathfrak{f}_\chi}(1,\chi_0)\neq0$
\cite[Chapter V, Theorem 10.2]{Janusz}.
\item[(iii)] The Gauss sum $T_\gamma(\overline{\chi}_0)$ is nonzero, in particular, $|T_\gamma(\overline{\chi}_0)|=\sqrt{\mathrm{N}_{K/\mathbb{Q}}(\mathfrak{f}_\chi)}$
\cite[Chapter 22, $\S$1]{Lang}.
\end{itemize}
\end{remark}

Now, let $\mathfrak{f}=N\mathcal{O}_K$ for some integer $N$
($\geq2$) with prime factorization $N=\prod_{k=1}^m p_k^{r_k}$. Let
$\mathcal{O}$ be the order of conductor $N$ in $K$. Here we follow
the notations stated in (\ref{P_S}) and (\ref{munu}).

\begin{lemma}\label{norm}
Let $C_0$ be the identity class of $\mathrm{Cl}(\mathfrak{f})$. Then
we have
\begin{equation*}
\mathrm{N}_{K_\mathfrak{f}/H_\mathcal{O}}(g_\mathfrak{f}(C_0))^{\min\{2,N-1\}}
=\nu_N^{12N}\prod_{S\subseteq\{1,\ldots,m\}}\Delta((N/P_S)\tau_K)^{(-1)^{|S|}N},
\end{equation*}
which is an algebraic integer.
\end{lemma}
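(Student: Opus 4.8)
The plan is to compute the norm $\mathrm{N}_{K_\mathfrak{f}/H_\mathcal{O}}(g_\mathfrak{f}(C_0))$ explicitly as a product over a set of coset representatives, using the transformation formula (\ref{transformation}), and then to match the resulting expression against the $\Delta$-quotient in the claim by exploiting the classical identity relating the $12N$-th power of the Siegel function $g_{\left[\begin{smallmatrix}0\\b/N\end{smallmatrix}\right]}$ (or more precisely a suitable product of Siegel functions over the cosets) to $\Delta$. First I would use Lemma \ref{Shimura} to identify $\mathrm{Gal}(K_\mathfrak{f}/H_\mathcal{O})$ with the image of $\{tI_2~|~t\in(\mathbb{Z}/N\mathbb{Z})^\times\}$ modulo $T_{K,N}$; by (\ref{transformation}), the conjugates of $g_\mathfrak{f}(C_0)$ under this group are the values $g_\mathfrak{f}(C)$ as $C$ ranges over the classes $[tI_2]$, i.e.\ over the subgroup $P_{K,\mathbb{Z}}(\mathfrak{f})/P_{K,1}(\mathfrak{f})$ of $\mathrm{Cl}(\mathfrak{f})$. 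So the norm is $\prod_{t} g_\mathfrak{f}(C_t)$ where $C_t$ runs over that subgroup, and the exponent $\min\{2,N-1\}$ accounts for the kernel $T_{K,N}$ having order $2$ (except when $N=2$, handled separately since $(\mathbb{Z}/2\mathbb{Z})^\times$ is trivial).

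Next I would translate each $g_\mathfrak{f}(C_t)$ back into a Siegel-function value at $\tau_K$. Taking $\mathfrak{c}=\mathcal{O}_K$ so $\mathfrak{f}\mathfrak{c}^{-1}=[N\tau_K,1]$, we have $1 = 0\cdot(N\tau_K) + 1\cdot 1$, giving $g_\mathfrak{f}(C_0) = g_{\left[\begin{smallmatrix}0\\1/N\end{smallmatrix}\right]}(N\tau_K)^{12N}$ — wait, one must be careful: the vector must lie in $\mathbb{Q}^2\setminus\mathbb{Z}^2$, so really $g_\mathfrak{f}(C_0)$ corresponds to $\left[\begin{smallmatrix}a/N\\b/N\end{smallmatrix}\right]$ with $(a,b)\equiv(0,1)$, and the action of $tI_2$ multiplies the vector by $t$, producing $\left[\begin{smallmatrix}0\\t/N\end{smallmatrix}\right]$. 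Thus the norm becomes $\prod_{t\in(\mathbb{Z}/N\mathbb{Z})^\times/\{\pm1\}} g_{\left[\begin{smallmatrix}0\\t/N\end{smallmatrix}\right]}(N\tau_K)^{12N}$ (raised to the appropriate power to absorb $\pm1$). Then I would invoke the standard product formula $\prod_{t=1}^{N-1} g_{\left[\begin{smallmatrix}0\\t/N\end{smallmatrix}\right]}(\tau)^{12N}$, or rather the distribution/product relations in \cite[Chapter 2]{K-L}, which express $\prod_{\gcd(t,N)=1} g_{\left[\begin{smallmatrix}0\\t/N\end{smallmatrix}\right]}$ in terms of $\eta$ or $\Delta$ at the divisors of $N$. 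Concretely, $\prod_{j=1}^{N-1}g_{\left[\begin{smallmatrix}0\\ j/N\end{smallmatrix}\right]}(\tau) = N\cdot\Delta(N\tau)/\Delta(\tau)$ up to normalization, and Möbius inversion over divisors converts the condition $\gcd(t,N)=1$ into the alternating product $\prod_{S}\Delta((N/P_S)\tau)^{(-1)^{|S|}}$, while the $\nu_N^{12N}$ factor collects the leftover power of $N$ (which is only nontrivial when $m=1$). That inclusion–exclusion over subsets $S\subseteq\{1,\dots,m\}$ is exactly the shape appearing in the statement, so the bookkeeping should close up.

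Finally, for the integrality assertion: $g_\mathfrak{f}(C_0)$ itself is an algebraic integer by Ramachandra's theorem if $\mathfrak{f}$ is not a prime power, and in general $g_\mathfrak{f}(C_0)$ is an $S$-unit supported above the rational prime(s) dividing $N$, so its norm is an algebraic integer; alternatively, the right-hand side $\nu_N^{12N}\prod_S\Delta((N/P_S)\tau_K)^{(-1)^{|S|}N}$ is visibly, by the $q$-expansion $\Delta(\tau)=(2\pi)^{12}q\prod(1-q^n)^{24}$, a value of a weakly holomorphic modular form with integral Fourier coefficients at a CM point, hence an algebraic integer. I expect the main obstacle to be getting the precise normalization right in the passage from Siegel functions to $\Delta$ — in particular tracking the $-q^{1/12}\cdots$ leading factor of $g_{\left[\begin{smallmatrix}0\\r_2\end{smallmatrix}\right]}$, the power of $N$ that must be pulled out (this is the origin of $\nu_N$), and verifying that the root of unity $e^{\pi i r_2(r_1-1)}$ contributes trivially after taking absolute values / the full product over $t$. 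Matching the exponent $\min\{2,N-1\}$ with the order of $T_{K,N}$ (and the degenerate case $N=2$, where $T_{K,N}$ may be larger for $K=\mathbb{Q}(\sqrt{-1}),\mathbb{Q}(\sqrt{-3})$) is the other delicate point, but it is a finite check.
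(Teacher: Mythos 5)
The paper offers no argument of its own here: the proof is a citation to \cite[Theorem 4.2]{E-K-S}. Your outline is essentially the computation that reference carries out --- identify $\mathrm{Gal}(K_\mathfrak{f}/H_\mathcal{O})$ with the classes $[n\mathcal{O}_K]$, $n\in(\mathbb{Z}/N\mathbb{Z})^\times$, use (\ref{transformation}) to write the norm as $\prod_n g_\mathfrak{f}([n\mathcal{O}_K])$, convert each factor to a Siegel value, and feed the product relation $\prod_{j=1}^{M-1}g_{\left[\begin{smallmatrix}0\\j/M\end{smallmatrix}\right]}(\tau)^{12}=M^{12}\Delta(M\tau)/\Delta(\tau)$ into a M\"obius inversion over the divisors of $N$. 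The bookkeeping does close up: $\nu_N^{12}=\prod_{d\mid N}(N/d)^{12\mu(d)}$ accounts for the constant, the squarefree divisors $d=P_S$ give the alternating $\Delta$-product, and the exponent $\min\{2,N-1\}$ arises because $n$ and $-n$ determine the same class $[n\mathcal{O}_K]$ (with $\pm1$ coinciding when $N=2$). The integrality claim is also fine, most cleanly via Ramachandra's theorem that $g_\mathfrak{f}(C_0)$ is always an algebraic integer.

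There is, however, one step that as written would derail the computation. For $C_0$ one takes $\mathfrak{c}=\mathcal{O}_K$, and then $\mathfrak{f}\mathfrak{c}^{-1}=N\mathcal{O}_K=[N\tau_K,N]$, \emph{not} $[N\tau_K,1]$ (the latter is the order $\mathcal{O}$, which is not an $\mathcal{O}_K$-ideal). Hence $\omega_1/\omega_2=\tau_K$ and $g_\mathfrak{f}(C_0)=g_{\left[\begin{smallmatrix}0\\1/N\end{smallmatrix}\right]}(\tau_K)^{12N}$, evaluated at $\tau_K$ rather than at $N\tau_K$; likewise $g_\mathfrak{f}([n\mathcal{O}_K])=g_{\left[\begin{smallmatrix}0\\n/N\end{smallmatrix}\right]}(\tau_K)^{12N}$. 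With your evaluation point the inclusion--exclusion would output $\Delta((N^2/P_S)\tau_K)$ and fail to match the statement; with $\tau_K$ it yields exactly $\nu_N^{12N}\prod_{S}\Delta((N/P_S)\tau_K)^{(-1)^{|S|}N}$. Two smaller corrections: the factor $2$ in the exponent is not the order of $T_{K,N}$ but the order of the image of $\{\pm1\}=\mathcal{O}_K^\times\cap\mathbb{Z}$ in $(\mathbb{Z}/N\mathbb{Z})^\times$, and for $K=\mathbb{Q}(\sqrt{-1}),\mathbb{Q}(\sqrt{-3})$ one must check that the extra units do not identify further classes --- they do not, since $n\zeta\equiv n'\pmod{N\mathcal{O}_K}$ with $\zeta\notin\{\pm1\}$ and $n,n'\in\mathbb{Z}$ prime to $N$ would force $N\mid n$.
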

\begin{proof}
See \cite[Theorem 4.2]{E-K-S}.
\end{proof}

\begin{theorem}\label{main}
Assume that the order of the group
\begin{equation*}
\pi_{p_k^{r_k}\mathcal{O}_K}(\mathcal{O}_K)^\times/
\pi_{p_k^{r_k}\mathcal{O}_K}(\mathcal{O}_K^\times)
\pi_{p_k^{r_k}\mathcal{O}_K}(\mathbb{Z})^\times
\end{equation*}
is greater than $2$ for each $k\in\{1,\ldots,m\}$. Then the special value
\begin{equation}\label{ringinvariant}
\nu_N^{12\mu_N/\gcd(24,P_N)}\prod_{S\subseteq\{1,\ldots,m\}}\eta((N/P_S)\tau_K)^{24(-1)^{|S|}\mu_N/\gcd(24,P_N)}
\end{equation}
generates $H_\mathcal{O}$ over $K$ as a real algebraic integer.
\end{theorem}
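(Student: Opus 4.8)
Throughout put $\mathfrak f = N\mathcal O_K$, let $C_0$ be the identity class of $\mathrm{Cl}(\mathfrak f)$, write $e=\gcd(24,P_N)$, and denote by $v$ the value in (\ref{ringinvariant}); recall $N\geq 2$. The plan is to verify, in order, that $v$ is real, that $v$ is an algebraic integer, and that $K(v)=H_\mathcal O$. With $g(\tau)$ the $\eta$-quotient of Proposition \ref{etaquotient} we have $v=\nu_N^{12\mu_N/e}\,g(\tau_K)$, and a short case check ($m=1$ versus $m\geq 2$, using the definitions of $\nu_N$ and $\mu_N$ and that $e\mid 24$) shows $\nu_N^{12\mu_N/e}$ is a positive rational integer. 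Hence $v\in H_\mathcal O$ and $v\in\mathbb R$ by Proposition \ref{etaliein}, which settles reality and the containment $K(v)\subseteq H_\mathcal O$.

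The crux is to realize $v$ as a relative norm of a Siegel--Ramachandra invariant, up to a fixed power. Set $B=\nu_N^{12}\prod_{S\subseteq\{1,\ldots,m\}}\eta((N/P_S)\tau_K)^{24(-1)^{|S|}}$. Directly from the definition of $v$ one has $v^{e}=B^{\mu_N}$. On the other hand, substituting $\Delta=(2\pi)^{12}\eta^{24}$ into Lemma \ref{norm} and using $\sum_{S}(-1)^{|S|}=0$ (so that every $(2\pi)^{12}$ factor cancels) gives $\mathrm{N}_{K_\mathfrak{f}/H_\mathcal{O}}(g_\mathfrak f(C_0))^{\min\{2,N-1\}}=B^{N}$. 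Combining, and expanding the norm over $\mathrm{Gal}(K_\mathfrak f/H_\mathcal O)\simeq\mathrm{Cl}(K_\mathfrak f/H_\mathcal O)$ via the transformation formula (\ref{transformation}), we obtain
\[
v^{Ne}=\Bigl(\prod_{C'\in\mathrm{Cl}(K_\mathfrak f/H_\mathcal O)}g_\mathfrak f(C')\Bigr)^{\mu_N\min\{2,N-1\}}=\mathrm{N}_{K_\mathfrak f/H_\mathcal O}(g_\mathfrak f(C_0))^{\mu_N\min\{2,N-1\}}.
\]
Since $g_\mathfrak f(C_0)$ is an algebraic integer, the right-hand side is one, so $v$ is a root of a monic polynomial with algebraic-integer coefficients, hence an algebraic integer.

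Now suppose, for contradiction, that $K(v)\subsetneq H_\mathcal O$. Then there is $C\in\mathrm{Cl}(\mathfrak f)\setminus\mathrm{Cl}(K_\mathfrak f/H_\mathcal O)$ with $v^{\sigma_\mathfrak f(C)}=v$. Transporting the displayed norm identity by $\sigma_\mathfrak f(CD)$ and by $\sigma_\mathfrak f(D)$ for arbitrary $D\in\mathrm{Cl}(\mathfrak f)$, taking $\ln|\cdot|$, and using (\ref{transformation}) in the form $g_\mathfrak f(C')^{\sigma_\mathfrak f(X)}=g_\mathfrak f(C'X)$, one finds that the function
\[
F(\overline D)=\sum_{E\in D\cdot\mathrm{Cl}(K_\mathfrak f/H_\mathcal O)}\ln|g_\mathfrak f(E)|
\]
on $\mathrm{Cl}(\mathfrak f)/\mathrm{Cl}(K_\mathfrak f/H_\mathcal O)$ satisfies $F(\overline{CD})=F(\overline D)$ for all $D$. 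Invoke Lemma \ref{constructcharacter} --- this is the step that uses the hypothesis $|G_k|>2$ --- to obtain a character $\chi$ of $\mathrm{Cl}(\mathfrak f)$ that is trivial on $\mathrm{Cl}(K_\mathfrak f/H_\mathcal O)$, has $\chi(C)\neq1$, and has conductor $\mathfrak f_\chi$ divisible by every prime ideal factor of $\mathfrak f$; regard $\chi$ also as a character of the quotient $\mathrm{Cl}(\mathfrak f)/\mathrm{Cl}(K_\mathfrak f/H_\mathcal O)$. Put $\Sigma=\sum_{\overline D}\overline{\chi(\overline D)}\,F(\overline D)$. Translating the summation index by $\overline C$, the $C$-invariance of $F$ forces $\chi(\overline C)\,\Sigma=\Sigma$, hence $\Sigma=0$ since $\chi(C)\neq1$; and regrouping the terms by cosets (using triviality of $\chi$ on $\mathrm{Cl}(K_\mathfrak f/H_\mathcal O)$) identifies $\Sigma$ with the Stickelberger element $S_\mathfrak f(\overline\chi)=\sum_{C'\in\mathrm{Cl}(\mathfrak f)}\overline{\chi(C')}\ln|g_\mathfrak f(C')|$. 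Thus $S_\mathfrak f(\overline\chi)=0$.

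Finally, feed $S_\mathfrak f(\overline\chi)=0$ into the second Kronecker limit formula (Proposition \ref{Kronecker}). Because $\chi$ is nonprincipal and, by property (iii) of Lemma \ref{constructcharacter}, every prime factor of $\mathfrak f$ divides $\mathfrak f_\chi$, we have $\mathfrak f_\chi\neq\mathcal O_K$ and the Euler product $\prod_{\mathfrak p\mid\mathfrak f,\ \mathfrak p\nmid\mathfrak f_\chi}(1-\overline{\chi}_0([\mathfrak p]))$ is empty, equal to $1$ (Remark \ref{Stickremark}(i)); moreover the proportionality constant on the right of Proposition \ref{Kronecker} is nonzero because $T_\gamma(\overline{\chi}_0)\neq0$ (Remark \ref{Stickremark}(iii)) and the remaining ingredients $\pi$, $\chi_0([\gamma\mathfrak d_K\mathfrak f_\chi])$, $N(\mathfrak f_\chi)$, $\sqrt{-d_K}$, $\omega(\mathfrak f_\chi)$ are evidently nonzero. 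Hence $S_\mathfrak f(\overline\chi)=0$ forces $L_{\mathfrak f_\chi}(1,\chi_0)=0$, contradicting Remark \ref{Stickremark}(ii). Therefore $K(v)=H_\mathcal O$, and the theorem follows. I expect the real obstacle to be not any single computation but the calibration of the exponents $\mu_N$ and $\gcd(24,P_N)$ so that $v$ is genuinely a power of a relative norm of $g_\mathfrak f(C_0)$ --- the cancellation of the $(2\pi)^{12}$ factors in Lemma \ref{norm} is exactly what makes that lemma applicable --- together with securing property (iii) in the character construction, which is what keeps the Euler factor trivial so that non-vanishing of $L_{\mathfrak f_\chi}(1,\chi_0)$ propagates to non-vanishing of $S_\mathfrak f(\overline\chi)$.
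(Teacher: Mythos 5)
Your proposal is correct and follows essentially the same route as the paper: reduce to the relative norm of $g_\mathfrak{f}(C_0)$ via Lemma \ref{norm}, assume $K(v)\subsetneq H_\mathcal{O}$, invoke Lemma \ref{constructcharacter} to produce a character trivial on $\mathrm{Cl}(K_\mathfrak{f}/H_\mathcal{O})$ with $\chi(C)\neq1$ and full conductor, show $S_\mathfrak{f}(\overline{\chi})=0$, and contradict the second Kronecker limit formula. The only (harmless) differences are cosmetic: you make explicit the identity $v^{Ne}=\mathrm{N}_{K_\mathfrak{f}/H_\mathcal{O}}(g_\mathfrak{f}(C_0))^{\mu_N\min\{2,N-1\}}$, which the paper leaves implicit in its appeal to Lemmas \ref{liein} and \ref{norm}, and you derive the vanishing of the Stickelberger element by a translation-invariance argument rather than the paper's explicit triple coset decomposition with $\sum_{C_2}\overline{\chi}(C_2)=0$.
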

\begin{proof}
Let $C_0$ be the identity class of $\mathrm{Cl}(\mathfrak{f})$ and
$\varepsilon=\mathrm{N}_{K_\mathfrak{f}/H_\mathcal{O}}(g_\mathfrak{f}(C_0))^\ell$
for a nonzero integer $\ell$. Suppose that $F=K(\varepsilon)$ is
properly contained in $H_\mathcal{O}$, so we can take a class
$C\in\mathrm{Cl}(K_\mathfrak{f}/F)\setminus\mathrm{Cl}(K_\mathfrak{f}/H_\mathcal{O})$.
By Lemma \ref{constructcharacter} we know that there exists a
character $\chi$ of $\mathrm{Cl}(\mathfrak{f})$ for which $\chi$ is
trivial on $\mathrm{Cl}(K_\mathfrak{f}/H_\mathcal{O})$,
$\chi(C)\neq1$ and every prime ideal factor of $\mathfrak{f}$
divides the conductor $\mathfrak{f}_\chi$ of $\chi$. Then we see
from Proposition \ref{Kronecker} and Remark \ref{Stickremark} that
the Stickelberger element $S_\mathfrak{f}(\overline{\chi})$ is
nonzero.
\par
On the other hand, we derive that
\begin{eqnarray*}
S_\mathfrak{f}(\overline{\chi})&=&\sum_{\begin{smallmatrix}C_1\in\mathrm{Cl}(\mathfrak{f})\\
C_1~\mathrm{mod}~\mathrm{Cl}(K_\mathfrak{f}/F)
\end{smallmatrix}}\sum_{\begin{smallmatrix}
C_2\in\mathrm{Cl}(K_\mathfrak{f}/F)\\
C_2~\mathrm{mod}~\mathrm{Cl}(K_\mathfrak{f}/H_\mathcal{O})
\end{smallmatrix}}\sum_{C_3\in\mathrm{Cl}(K_\mathfrak{f}/H_\mathcal{O})}\overline{\chi}(C_1C_2C_3)\ln|g_\mathfrak{f}(C_1C_2C_3)|\\
&=&\sum_{C_1}\overline{\chi}(C_1)\sum_{C_2}\overline{\chi}(C_2)\sum_{C_3}\overline{\chi}(C_3)
\ln|g_\mathfrak{f}(C_0)^{\sigma_\mathfrak{f}(C_1)\sigma_\mathfrak{f}(C_2)\sigma_\mathfrak{f}(C_3)}|
\quad\textrm{by (\ref{transformation})}\\
&=&\sum_{C_1}\overline{\chi}(C_1)\sum_{C_2}\overline{\chi}(C_2)\sum_{C_3}\ln|g_\mathfrak{f}(C_0)^{\sigma_\mathfrak{f}(C_1)\sigma_\mathfrak{f}(C_2)\sigma_\mathfrak{f}(C_3)}|
\quad\textrm{since $\chi$ is trivial on $\mathrm{Cl}(K_\mathfrak{f}/H_\mathcal{O})$}\\
&=&\sum_{C_1}\overline{\chi}(C_1)\sum_{C_2}\overline{\chi}(C_2)\ln|\mathrm{N}_{K_\mathfrak{f}/H_\mathcal{O}}(g_\mathfrak{f}(C_0))^{\sigma_\mathfrak{f}(C_1)\sigma_\mathfrak{f}(C_2)}|\\
&=&(1/\ell)\sum_{C_1}\overline{\chi}(C_1)\ln|\varepsilon^{\sigma_\mathfrak{f}(C_1)}|
\sum_{C_2}\overline{\chi}(C_2)\quad\textrm{by the fact}~\varepsilon=
\mathrm{N}_{K_\mathfrak{f}/H_\mathcal{O}}(g_\mathfrak{f}(C_0))^\ell\in F\\
&=&0,
\end{eqnarray*}
because $\chi$ can be viewed as a nonprincipal character of
$\mathrm{Cl}(K_\mathfrak{f}/F)/\mathrm{Cl}(K_\mathfrak{f}/H_\mathcal{O})$.
This yields a contradiction, and hence we achieve $
H_\mathcal{O}=F=K(\mathrm{N}_{K_\mathfrak{f}/H_\mathcal{O}}(g_\mathfrak{f}(C_0))^\ell)$.
\par
Finally, the special value in (\ref{ringinvariant})
becomes a generator of $H_\mathcal{O}$ over $K$ as a real algebraic integer
by Lemmas \ref{liein} and \ref{norm}. This completes the proof.
\end{proof}

\begin{remark}\label{condition}
Let $\mathfrak{g}=p^r\mathcal{O}_K$ for a prime $p$ and a positive integer $r$, and consider the group
\begin{equation*}
G=\pi_\mathfrak{g}(\mathcal{O}_K)^\times/
\pi_\mathfrak{g}(\mathcal{O}_K^\times)\pi_\mathfrak{g}(\mathbb{Z})^\times.
\end{equation*}
We have the order formulas
\begin{eqnarray*}
|\pi_\mathfrak{g}(\mathcal{O}_K)^\times|&=&
p^{2(r-1)}(p-1)\bigg(p-\bigg(\frac{d_K}{p}\bigg)\bigg),\\
|\pi_\mathfrak{g}(\mathcal{O}_K^\times)\pi_\mathfrak{g}(\mathbb{Z})^\times|&=&
(|\mathcal{O}_K^\times|/2)p^{r-1}(p-1),
\end{eqnarray*}
where $(\frac{d_K}{p})$ stands for the Kronecker symbol.
\begin{equation*}
\bigg(\frac{d_K}{p}\bigg)=\left\{\begin{array}{ll}
\textrm{the Legendre symbol} & \textrm{if $p$ is odd},\\
\textrm{the Kronecker symbol} & \textrm{if}~p=2
\end{array}\right.
\end{equation*}
\cite[p.148]{Cox}. Then we achieve
\begin{equation*}
|G|=\frac{2p^{r-1}}{|\mathcal{O}_K^\times|}\bigg(p-\bigg(\frac{d_K}{p}\bigg)\bigg).
\end{equation*}
And, one can classify all the cases in which $|G|=1$ or $2$ as listed in Table \ref{table1}:
\begin{table}[!h]
\begin{flushleft}
\footnotesize
\begin{tabular} {|c||c|c|c|c|c|c|}
\hline
\multirow{2}{10mm}{$~~~K$} &
\multicolumn{6}{|c|}{neither
$\mathbb{Q}(\sqrt{-1})$ nor $\mathbb{Q}(\sqrt{-3})$}\\
\cline{2-7}
 & \tiny$d_K\equiv 1\pmod{24}$ &
 \tiny$d_K\equiv 9,17\pmod{24}$ &
 \tiny$d_K\equiv 4,16\pmod{24}$ &
 \tiny$d_K\equiv 0,8,12,20\pmod{24}$ &
 \tiny$d_K\equiv 13\pmod{24}$ &
 \tiny otherwise\\
\hline\hline
$~~~p^r$ &  $2,2^2,3$ & $2,2^2$ & $2,3$ & $2$ & $3$ & none\\ 
\hline
\end{tabular}
\vspace{0.2cm}\\
\begin{tabular} {|c||c|c|}
\hline
{$\quad K\quad$} &
{$\mathbb{Q}(\sqrt{-1})$} &
{$\mathbb{Q}(\sqrt{-3})$} 
\\
\hline\hline
$~~~p^r$ & $2,2^2,3,5$ & $2,2^2,3,5,7$ \\
\hline
\end{tabular}
\end{flushleft}
\caption{The cases $|G|=1$ or $2$}\label{table1}
\end{table}
\end{remark}

\section {Examples}

In this last section, we shall present some examples
of computing
minimal polynomials of ring class invariants
with relatively small coefficients
developed in Theorem \ref{main}
so as to apply them to quadratic Diophantine equations
concerning non-convenient numbers.

\begin{example}\label{Example1}
Let $K=\mathbb{Q}(\sqrt{-1})$, so $d_K=-4$ and $\tau_K=\sqrt{-1}$.
In this case, we have $H_K=K$.
Let $\mathcal{O}$ be the order of conductor $13$ in $K$. Then
the special value
$13(\eta(13\tau_K)/\eta(\tau_K))^2$ generates $H_\mathcal{O}$ over
$K$ as a real algebraic integer by Theorem \ref{main} and Remark
\ref{condition}. And, we attain by Remark \ref{Galoisgroup} that
\begin{eqnarray*}
\mathrm{Gal}(H_\mathcal{O}/K)&\simeq&W_{K,13}
/\langle T_{K,13},tI_2~|~t\in(\mathbb{Z}/13\mathbb{Z})^\times\rangle\\
&=&\{\left[\begin{smallmatrix}1&0\\0&1\end{smallmatrix}\right]
\left[\begin{smallmatrix}1&0\\0&1\end{smallmatrix}\right],
\left[\begin{smallmatrix}1&0\\0&2\end{smallmatrix}\right]
\left[\begin{smallmatrix}1&-1\\-6&7\end{smallmatrix}\right],
\left[\begin{smallmatrix}1&0\\0&5\end{smallmatrix}\right]
\left[\begin{smallmatrix}1&-2\\3&-5\end{smallmatrix}\right],\\
&&
\left[\begin{smallmatrix}1&0\\0&10\end{smallmatrix}\right]
\left[\begin{smallmatrix}1&-3\\-1&4\end{smallmatrix}\right],
\left[\begin{smallmatrix}1&0\\0&11\end{smallmatrix}\right]
\left[\begin{smallmatrix}14&-33\\3&-7\end{smallmatrix}\right],
\left[\begin{smallmatrix}1&0\\0&4\end{smallmatrix}\right]
\left[\begin{smallmatrix}1&4\\-1&-3\end{smallmatrix}\right]
\},
\end{eqnarray*}
from which we can compute (by using Maple ver.15)
\begin{eqnarray*}
&&\min(13(\eta(13\tau_K)/\eta(\tau_K))^2,K)\\&=&
(X-13(\eta(13\tau)/\eta(\tau))^2\circ\left[\begin{smallmatrix}1&0\\0&1\end{smallmatrix}\right]
(\tau_K))
(X-13(\eta(13\tau)/\eta(\tau))^2\circ\left[\begin{smallmatrix}1&-1\\-6&7\end{smallmatrix}\right]
(\tau_K))
\\
&&(X-13(\eta(13\tau)/\eta(\tau))^2\circ\left[\begin{smallmatrix}1&-2\\3&-5\end{smallmatrix}\right]
(\tau_K))
(X-13(\eta(13\tau)/\eta(\tau))^2\circ\left[\begin{smallmatrix}1&-3\\-1&4\end{smallmatrix}\right]
(\tau_K))\\
&&(X-13(\eta(13\tau)/\eta(\tau))^2\circ\left[\begin{smallmatrix}14&-33\\3&-7\end{smallmatrix}\right]
(\tau_K))
(X-13(\eta(13\tau)/\eta(\tau))^2\circ\left[\begin{smallmatrix}1&4\\-1&-3\end{smallmatrix}\right]
(\tau_K))\\
&=&X^6+10X^5+46X^4+108X^3+122X^2+38X-1.
\end{eqnarray*}
On the other hand, by using the relation
\begin{equation*}
j(\tau)=(2^8\eta(2\tau)^{16}\eta(\tau)^{-16}+\eta(2\tau)^{-8}\eta(\tau)^8)^3
\end{equation*}
\cite[pp.256--257]{Cox}, one can also get
\begin{eqnarray*}
\min(j(13\tau_K),K)&=&
X^6
-10368X^5
+44789760X^4
-103195607040X^3\\
&&+133741506723840X^2
-92442129447518208X\\
&&+26623333280885243904.
\end{eqnarray*}
Here we observe that the coefficients of $\min(13(\eta(13\tau_K)/\eta(\tau_K))^2,K)$
are relatively smaller than those of
$\min(j(13\tau_K),K)$.
\end{example}

\begin{example}\label{Example2}
Let $K=\mathbb{Q}(\sqrt{-7})$, so $d_K=-7$ and $\tau_K=(-1+\sqrt{-7})/2$.
Then we know $H_K=K$, too.
\begin{itemize}
\item[(i)] Let $\mathcal{O}$ be the order of conductor $7$ in $K$.
Then the special value
$7^2(\eta(7\tau_K)/\eta(\tau_K))^4$ generates $H_\mathcal{O}$ over
$K$ as a real algebraic integer by Theorem \ref{main} and Remark
\ref{condition}. And, we derive by Remark \ref{Galoisgroup}
\begin{eqnarray*}
\mathrm{Gal}(H_\mathcal{O}/K)&\simeq&
W_{K,7}/\langle T_{K,7},tI_2~|~t\in(\mathbb{Z}/7\mathbb{Z})^\times\rangle\\
&=&\{\left[\begin{smallmatrix}1&0\\0&1\end{smallmatrix}\right]
\left[\begin{smallmatrix}1&0\\0&1\end{smallmatrix}\right],
\left[\begin{smallmatrix}1&0\\0&2\end{smallmatrix}\right]
\left[\begin{smallmatrix}-1&-2\\4&7\end{smallmatrix}\right],
\left[\begin{smallmatrix}1&0\\0&2\end{smallmatrix}\right]
\left[\begin{smallmatrix}7&-2\\11&-3\end{smallmatrix}\right],\\
&&\left[\begin{smallmatrix}1&0\\0&4\end{smallmatrix}\right]
\left[\begin{smallmatrix}1&5\\2&11\end{smallmatrix}\right],
\left[\begin{smallmatrix}1&0\\0&1\end{smallmatrix}\right]
\left[\begin{smallmatrix}2&5\\1&3\end{smallmatrix}\right],
\left[\begin{smallmatrix}1&0\\0&1\end{smallmatrix}\right]
\left[\begin{smallmatrix}-3&5\\1&-2\end{smallmatrix}\right],
\left[\begin{smallmatrix}1&0\\0&4\end{smallmatrix}\right]
\left[\begin{smallmatrix}5&12\\2&5\end{smallmatrix}\right]\}
\end{eqnarray*}
and obtain
\begin{eqnarray*}
&&\min(7^2(\eta(7\tau_K)/\eta(\tau_K))^4,K)\\&=&
(X-7^2(\eta(7\tau)/\eta(\tau))^4\circ\left[\begin{smallmatrix}1&0\\0&1\end{smallmatrix}\right]
(\tau_K))
(X-7^2(\eta(7\tau)/\eta(\tau))^4\circ\left[\begin{smallmatrix}-1&-2\\4&7\end{smallmatrix}\right]
(\tau_K))\\
&&
(X-7^2(\eta(7\tau)/\eta(\tau))^4\circ\left[\begin{smallmatrix}7&-2\\11&-3\end{smallmatrix}\right]
(\tau_K))
(X-7^2(\eta(7\tau)/\eta(\tau))^4\circ\left[\begin{smallmatrix}1&5\\2&11\end{smallmatrix}\right]
(\tau_K))\\
&&(X-7^2(\eta(7\tau)/\eta(\tau))^4\circ\left[\begin{smallmatrix}2&5\\1&3\end{smallmatrix}\right]
(\tau_K))
(X-7^2(\eta(7\tau)/\eta(\tau))^4\circ\left[\begin{smallmatrix}-3&5\\1&-2\end{smallmatrix}\right]
(\tau_K))\\
&&(X-7^2(\eta(7\tau)/\eta(\tau))^4\circ\left[\begin{smallmatrix}5&12\\2&5\end{smallmatrix}\right]
(\tau_K))\\
&=&X^7+21X^6+175X^5+679X^4+1162X^3+490X^2+588X+7.
\end{eqnarray*}
\item[(ii)] Now, let $\mathcal{O}$ be the order of conductor $6$.
We know by Remark \ref{Galoisgroup} that
\begin{eqnarray*}
\mathrm{Gal}(H_\mathcal{O}/K)&\simeq&
W_{K,6}/\langle T_{K,6},tI_2~|~t\in(\mathbb{Z}/6\mathbb{Z})^\times\rangle\\
&=&\{\left[\begin{smallmatrix}1&0\\0&1\end{smallmatrix}\right]
\left[\begin{smallmatrix}1&0\\0&1\end{smallmatrix}\right],
\left[\begin{smallmatrix}1&0\\0&1\end{smallmatrix}\right]
\left[\begin{smallmatrix}5&2\\2&1\end{smallmatrix}\right],
\left[\begin{smallmatrix}1&0\\0&-1\end{smallmatrix}\right]
\left[\begin{smallmatrix}3&-2\\2&-1\end{smallmatrix}\right],
\left[\begin{smallmatrix}1&0\\0&-1\end{smallmatrix}\right]
\left[\begin{smallmatrix}1&2\\-2&-3\end{smallmatrix}\right]\},
\end{eqnarray*}
which is of order $4$.
Set $h(\tau)=(\eta(6\tau)\eta(\tau)/\eta(3\tau)\eta(2\tau))^{12}$.
Since $h(\tau_K)\in\mathbb{R}$, we deduce that
\begin{eqnarray*}
[K(h(\tau_K)):K]&=&
[K(h(\tau_K)):\mathbb{Q}]/[K:\mathbb{Q}]\\
&=&
[K(h(\tau_K)):\mathbb{Q}(h(\tau_K))][\mathbb{Q}(h(\tau_K)):\mathbb{Q}]
/[K:\mathbb{Q}]\\
&=&[\mathbb{Q}(h(\tau_K)):\mathbb{Q}].
\end{eqnarray*}
Furthermore, we see that the polynomial
\begin{eqnarray*}
&&\prod_{\gamma\in\mathrm{Gal}(H_\mathcal{O}/K)}(X-h(\tau_K)^\gamma)
\\&=&(X-h(\tau)\circ\left[\begin{smallmatrix}
1&0\\0&1
\end{smallmatrix}\right](\tau_K))
(X-h(\tau)\circ\left[\begin{smallmatrix}
5&2\\2&1
\end{smallmatrix}\right](\tau_K))\\
&&(X-h(\tau)\circ\left[\begin{smallmatrix}
3&-2\\2&-1
\end{smallmatrix}\right](\tau_K))
(X-h(\tau)\circ\left[\begin{smallmatrix}
1&2\\-2&-3
\end{smallmatrix}\right](\tau_K))\\
&=&X^4-35X^3+198X^2+4060X+1
\end{eqnarray*}
is irreducible over $\mathbb{Q}$. Hence
$h(\tau_K)=(\eta(6\tau_K)\eta(\tau_K)/\eta(3\tau_K)\eta(2\tau_K))^{12}$
generates $H_\mathcal{O}$ over $K$ as a unit, although we couldn't
directly apply Theorem \ref{main} to this case. This example
indicates that there seems to be a room for the condition on the
order of the group
$\pi_{p_k^{r_k}\mathcal{O}_K}(\mathcal{O}_K)^\times/
\pi_{p_k^{r_k}\mathcal{O}_K}(\mathcal{O}_K^\times)
\pi_{p_k^{r_k}\mathcal{O}_K}(\mathbb{Z})^\times$ to be improved
further.
\end{itemize}
\end{example}

\begin{example}\label{Example3}
Let $K=\mathbb{Q}(\sqrt{-6})$. We then have $d_K=-24$ and
$\tau_K=\sqrt{-6}$. Let $\mathcal{O}$ be the order of conductor $3$
in $K$. Then the real algebraic integer
$3^6(\eta(3\tau_K)/\eta(\tau_K))^{12}$ generates $H_\mathcal{O}$
over $K$ by Theorem \ref{main} and Remark \ref{condition}. And, we
see by Remark \ref{Galoisgroup} that
\begin{equation*}
\mathrm{Gal}(H_\mathcal{O}/H_K)\simeq W_{K,3}/\langle
T_{K,3},tI_2~|~t\in(\mathbb{Z}/3\mathbb{Z})^\times\rangle
=\{\left[\begin{smallmatrix}1&0\\0&1\end{smallmatrix}\right],
\left[\begin{smallmatrix}1&0\\1&1\end{smallmatrix}\right],
\left[\begin{smallmatrix}1&0\\2&1\end{smallmatrix}\right]\}.
\end{equation*}
However, in this case, $H_K\neq K$. On the other hand, as is
well-known $\mathrm{Gal}(H_K/K)$ is isomorphic to the form class
group $\mathrm{C}(d_K)$ of discriminant $d_K=-24$ which consists of
two reduced primitive positive definite quadratic forms
\begin{equation*}
Q_1=X^2+6Y^2\quad\textrm{and}\quad
Q_2=2X^2+3Y^2
\end{equation*}
\cite[Theorems 2.8, 5.23 and 7.7]{Cox}.
Corresponding to $Q_1$ and $Q_2$ we let
\begin{equation*}
\beta_1=\left[\begin{smallmatrix}1&0\\0&1\end{smallmatrix}\right],\tau_1=\sqrt{-6}
\quad\textrm{and}\quad
\beta_2=\left[\begin{smallmatrix}2&0\\0&1\end{smallmatrix}\right],\tau_2=\sqrt{-6}/2,
\end{equation*}
respectively. Then due to Stevenhagen \cite{Stevenhagen} the Galois
conjugates of $h(\tau_K)$, where
$h(\tau)=3^6(\eta(3\tau)/\eta(\tau))^{12}$, are given by
\begin{equation*}
h^{\gamma\beta_k}(\tau_k)\quad\textrm{for}~\gamma\in\mathrm{Gal}(H_\mathcal{O}/H_K)~\textrm{and}~k=1,2
\end{equation*}
(see also \cite[Theorem 2.4]{J-K-S}). And, we achieve
\begin{eqnarray*}
&&\{\gamma\beta_k~|~\gamma\in\mathrm{Gal}(H_\mathcal{O}/H_K),~k=1,2\}
\quad(\subseteq\mathrm{GL}_2(\mathbb{Z}/3\mathbb{Z})/\{\pm I_2\})\\&=&
\{\left[\begin{smallmatrix}1&0\\0&1\end{smallmatrix}\right]
\left[\begin{smallmatrix}1&0\\0&1\end{smallmatrix}\right],
\left[\begin{smallmatrix}1&0\\0&1\end{smallmatrix}\right]
\left[\begin{smallmatrix}1&0\\1&1\end{smallmatrix}\right],
\left[\begin{smallmatrix}1&0\\0&1\end{smallmatrix}\right]
\left[\begin{smallmatrix}1&0\\2&1\end{smallmatrix}\right],
\left[\begin{smallmatrix}1&0\\0&2\end{smallmatrix}\right]
\left[\begin{smallmatrix}-1&0\\0&-1\end{smallmatrix}\right],
\left[\begin{smallmatrix}1&0\\0&2\end{smallmatrix}\right]
\left[\begin{smallmatrix}-1&0\\1&-1\end{smallmatrix}\right],
\left[\begin{smallmatrix}1&0\\0&2\end{smallmatrix}\right]
\left[\begin{smallmatrix}-1&0\\2&-1\end{smallmatrix}\right]\},
\end{eqnarray*}
from which we get
\begin{eqnarray*}
&&\min(3^6(\eta(3\tau_K)/\eta(\tau_K))^{12},K)\\
&=&
(X-h(\tau)\circ\left[\begin{smallmatrix}1&0\\0&1\end{smallmatrix}\right](\sqrt{-6}))
(X-h(\tau)\circ\left[\begin{smallmatrix}1&0\\1&1\end{smallmatrix}\right](\sqrt{-6}))
(X-h(\tau)\circ\left[\begin{smallmatrix}1&0\\2&1\end{smallmatrix}\right](\sqrt{-6}))\\
&&(X-h(\tau)\circ\left[\begin{smallmatrix}-1&0\\0&-1\end{smallmatrix}\right](\sqrt{-6}/2))
(X-h(\tau)\circ\left[\begin{smallmatrix}-1&0\\1&-1\end{smallmatrix}\right](\sqrt{-6}/2))\\
&&(X-h(\tau)\circ\left[\begin{smallmatrix}-1&0\\2&-1\end{smallmatrix}\right](\sqrt{-6}/2))\\
&=&X^6+234X^5+39015X^4+1335852X^3+14036895X^2-4833270X+729.
\end{eqnarray*}
\end{example}

\begin{remark}\label{quadratic}
Let $n$ be a positive integer and $f_n(X)\in\mathbb{Z}[X]$ be the
minimal polynomial of a real algebraic integer which generates the
ring class field of the order $\mathbb{Z}[\sqrt{-n}]$ in the
imaginary quadratic field $\mathbb{Q}(\sqrt{-n})$. Then we have the
assertion that if an odd prime $p$ divides neither $n$ nor the
discriminant of $f_n(X)$, then $p$ can be written in the form
$p=x^2+ny^2$ for some $x,y\in\mathbb{Z}$ if and only if
$(\frac{-n}{p})=1$ and $f_n(X)\equiv0\pmod{p}$ has an integer
solution as well \cite[Theorem 9.2]{Cox}. Whenever
the equivalent condition cannot be expressed as
$p\equiv c_1,\ldots,c_m\pmod{4n}$, we call such $n$ a non-convenient number. As for the
convenient numbers we refer to \cite[$\S$3.C]{Cox}, \cite{Weil} and \cite{Weinberger}.
\begin{itemize}
\item[(i)]
We are able to use our ring class invariants in Examples
\ref{Example1} and \ref{Example3} for these quadratic Diophantine
problems. First, let $K=\mathbb{Q}(\sqrt{-1})$ (, so
$\tau_K=\sqrt{-1}$). Then we get
\begin{equation*}
\mathrm{disc}(13(\eta(13\tau_K)/\eta(\tau_K))^2,K))
=2^{10}\cdot3^6\cdot13^5,
\end{equation*}
and derive that a prime $p$ satisfies $(\frac{-169}{p})=1$ if and only
if $p=2$ or $p\equiv1\pmod{4}$. Thus we reach the conclusion that
if $p$ is a prime other than $13$, then $p$ can be written in the form
$p=x^2+169y^2$ for some $x,y\in\mathbb{Z}$
if and only if $p\equiv1\pmod{4}$ and
$X^6+10X^5+46X^4+108X^3+122X^2+38X-1\equiv0\pmod{p}$ has an integer solution.
\par
Second, let $K=\mathbb{Q}(\sqrt{-6})$ (, so $\tau_K=\sqrt{-6}$). We compute
\begin{equation*}
\mathrm{disc}(3^6(\eta(3\tau_K)/\eta(\tau_K))^{12},K))
=2^{69}\cdot3^{36}\cdot13^4\cdot17^2\cdot19^4\cdot23^2,
\end{equation*}
and find that a prime $p$ satisfies $(\frac{-54}{p})=1$ if and only if
$p\equiv1,5,7,11\pmod{24}$.
Thus we can conclude that a prime $p$ can be written in the form $p=x^2+54y^2$ for some $x,y\in\mathbb{Z}$ if and only if $p\equiv1,5,7,11\pmod{24}$ and $X^6+234X^5+39015X^4+1335852X^3+14036895X^2-4833270X+729\equiv0\pmod{p}$ has an integer solution.

\item[(ii)] In like manner, one can further show by using Example
\ref{Example2}(ii) that
 a prime $p$ can be expressed as $p=x^2+63y^2$ for some $x,y\in\mathbb{Z}$
 if and only if $p\equiv1,9,11\pmod{14}$ and $X^4-35X^3+198X^2+4060X+1\equiv0\pmod{p}$ has an integer solution.
\end{itemize}
\end{remark}

\bibliographystyle{amsplain}

\begin{thebibliography}{99}

\bibitem {Cox} D. A. Cox, \textit{Primes of the form $x^2+ny^2$: Fermat, Class Field, and Complex Multiplication},
John Wiley \& Sons, Inc., New York, 1989.



\bibitem {E-K-S} I. S. Eum, J. K. Koo and D. H. Shin,
\textit{Ring class invariants over imaginary quadratic fields},
Forum Math. 28 (2016), no. 2, 201--217.

\bibitem {Janusz} G. J. Janusz, \textit{Algebraic Number Fields},
2nd edition, Grad. Studies in Math. 7, Amer. Math. Soc., Providence,
R. I., 1996.

\bibitem {J-K-S} H. Y. Jung, J. K. Koo and D. H. Shin,
\textit{Ray class invariants over imaginary quadratic fields}, Tohoku Math. J. (2) 63 (2011), no. 3, 413--426.

\bibitem {K-S} J. K. Koo and D. H. Shin, \textit{Function fields of certain arithmetic curves and application},
Acta Arith. 141 (2010), no. 4, 321--334.

\bibitem {K-S2} J. K. Koo and D. H. Shin, \textit{On some arithmetic properties of Siegel functions},
Math. Zeit. 264 (2010), no. 1, 137--177.

\bibitem {K-L} D. Kubert and S. Lang, \textit{Modular Units}, Grundlehren der mathematischen Wissenschaften 244,
Spinger-Verlag, 1981.

\bibitem {Lang} S. Lang, \textit{Elliptic Functions}, With an appendix by J. Tate, 2nd edition, Grad. Texts in Math. 112,
Spinger-Verlag, New York, 1987.

\bibitem {Ono} K. Ono, \textit{The Web of Modularity:
Arithmetic of the Coefficients of Modular Forms and $q$-series},
CBMS Regional Conf. Series in Math. 102,
Amer. Math. Soc., Providence, R. I., 2004.

\bibitem {Ramachandra} K. Ramachandra, \textit{Some applications of Kronecker's limit formula},
Ann. of Math. (2) 80 (1964), 104--148.



\bibitem {Schertz2} R. Schertz, \textit{Construction of ray class fields
by elliptic units}, J. Th\'{e}or. Nombres Bordeaux 9 (1997), no. 2,
383--394.



\bibitem {Serre} J.-P. Serre, \textit{A Course in Arithmetic}, Grad. texts in Math., Springer-Verlag, New York-Heidelberg, 1973.

\bibitem {Shimura} G. Shimura, \textit{Introduction to the Arithmetic Theory of Automorphic Functions}, Iwanami Shoten and Princeton
University Press, Princeton, N. J., 1971.

\bibitem {Stevenhagen} P. Stevenhagen, \textit{Hilbert's 12th problem, complex multiplication and Shimura reciprocity}, Class Field Theory-Its Centenary and Prospect (Tokyo,
1998), 161--176, Adv. Stud. Pure Math. 30, Math. Soc. Japan, Tokyo,
2001.

\bibitem {Weil} A. Weil, \textit{Number Theory: An Approach Through History; From Hammurapi to Legendre}, Birkh\"{a}user Boston, Inc., Boston, MA, 1984.

\bibitem {Weinberger} P. J. Weinberger, \textit{Exponents of the class groups of complex
 quadratic fields}, Acta Arith. 22 (1973), 117--124.

\end{thebibliography}

\address{
Department of Mathematical Sciences \\
KAIST \\
Daejeon 34141\\
Republic of Korea} {jkkoo@math.kaist.ac.kr}
\address{
Department of Mathematics\\
Hankuk University of Foreign Studies\\
Yongin-si, Gyeonggi-do 17035\\
Republic of Korea} {dhshin@hufs.ac.kr}
\address{
Department of Mathematical Sciences \\
KAIST \\
Daejeon 34141\\
Republic of Korea} {math\_dsyoon@kaist.ac.kr}

\end{document}